\newtheorem{theorem}{Theorem}[section]
\newtheorem{prop}{Proposition}[section]
\newtheorem{lemma}{Lemma}[section]
\newtheorem{remark}{Remark}[section]
\newcommand{\ml}{\mathcal}
\newcommand{\mb}{\mathbb}
\DeclareMathOperator{\intt}{int}
\DeclareMathOperator{\extt}{ext}
\DeclareMathOperator{\bdd}{bdd}
\title{Asymptotic profiles and singular limits for the viscoelastic damped wave equation with memory of type I}
\author[1]{Wenhui Chen\thanks{Corresponding author: Wenhui Chen (wenhui.chen.math@gmail.com)}}
\affil[1]{School of Mathematical Sciences, Shanghai Jiao Tong University, 200240 Shanghai, China}
\date{}
\begin{document}

\maketitle
\begin{abstract}
In this paper, we are interested in the Cauchy problem for the viscoelastic damped wave equation with memory of type I. By applying WKB analysis and Fourier analysis, we explain the memory's influence on dissipative structures and asymptotic profiles of solutions to the model with weighted $L^1$ initial data. Furthermore, concerning standard energy and the solution itself, we establish singular limit relations between the Moore-Gibson-Thompson equation with memory and the viscoelastic damped wave equation with memory.\\
	
	\noindent\textbf{Keywords:} Viscoelastic damped wave equation, memory, dissipative structure, asymptotic profiles, singular limit, Moore-Gibson-Thompson equation.\\
	
	\noindent\textbf{AMS Classification (2020)} Primary: 35B40, 35L15; Secondary: 74D05, 35L05, 34K26, 35C20.
\end{abstract}
\fontsize{12}{15}
\selectfont

\section{Introduction}\label{Sec_Intro}
In this paper, we consider the following Cauchy problem for the viscoelastic damped wave equation with memory:
\begin{align}\label{VDW_Equation}
\begin{cases}
u_{tt}-\Delta u-\Delta u_t+g\ast \Delta u=0,&x\in\mb{R}^n,\ t>0,\\
(u,u_t)(0,x)=(u_0,u_1)(x),&x\in\mb{R}^n,
\end{cases}
\end{align}
where $g=g(t)$ stands for the time-dependent relaxation function with an exponential decay property such that
\begin{align}\label{Exponential_Decay_Relax}
g(t):=\mathrm{e}^{-\gamma t}\ \ \mbox{with}\ \ \gamma>1,
\end{align}
which is the one of the most relevant case in the connection with evolution equations with memory, e.g. \cite{DellOroPata2017-Milan,Mori-Timoshenko,DellOroLasieckaPata2019,Liu-Ueda-2020,Chen-Dao2020}. Here, the convolution term with respect to the time variable is denoted by
\begin{align}\label{Memory_Type_I}
(g\ast\Delta u)(t,x):=\int_0^tg(t-\eta)\Delta u(\eta,x)\mathrm{d}\eta.
\end{align}
According to the classification of memory in \cite{LasieckaWang2016}, the memory shown in \eqref{Memory_Type_I} is the so-called type I, in which the integral itself suggests the non-locality in time and it somehow leads that the equation remembers information in the past history. Concerning the boundary value problem, the memory effect in semilinear viscoelastic damped wave equations has been investigated by \cite{Borini-Pata-1999,Plinio-Pata,Plinio-Pata-Zelik} from the viewpoint of the unique existence of global attractors. However, so far to the best of the authors' knowledge, the Cauchy problem for the viscoelastic damped wave equation with memory does not been studied. We will answer in this paper for some qualitative properties of solutions.

Let us present a historical overview on some results for the linear evolution equations in $\mb{R}^n$, which are strongly linked with our model and the motivation for us to consider the viscoelastic damped wave equation with memory.

In recent years, the Cauchy problem for the viscoelastic damped wave equation, namely,
\begin{align}\label{Eq_Visco_Dam_Wav_NOT_Memory}
\begin{cases}
u_{tt}-\Delta u-\Delta u_t=0,&x\in\mb{R}^n,\ t>0,\\
(u,u_t)(0,x)=(u_0,u_1)(x),&x\in\mb{R}^n.
\end{cases}
\end{align}
have caught a lot of attention, where the pioneering work was introduced by \cite{Ponce1985} more than thirty years ago. Concerning dissipative structures of the Cauchy problem \eqref{Eq_Visco_Dam_Wav_NOT_Memory}, the author of \cite{Shibata2000} investigated some $L^p-L^q$ estimates not necessary on the conjecture line. Later, some $L^2-L^2$ estimates with additional $L^1$ regularity were derived in \cite{IkehataNatsume,Charao-Da-Ike-2013,D'A-Re-2014}. For another, carrying suitable weighted $L^1$ data, a series paper \cite{Ike-Tod-Yor-2013,Ike-2014,Ike-Ono-2017,BarreraVolkmer2019,BarreraVolkmer2020} derived the diffusion waves type asymptotic profiles of solutions and optimal estimates in the $L^2$ norm. Let us turn to our model \eqref{VDW_Equation}. We found that by formally taking $g(t)\equiv0$, the Cauchy problem \eqref{VDW_Equation} will immediately become the viscoelastic damped wave equation \eqref{Eq_Visco_Dam_Wav_NOT_Memory}. However, as describing in \cite{LasieckaWang2016}, the more is not necessarily the better. For example, \cite{Fabrizio-Poli-2002} observed that the additional memory would destroy exponential stabilities of a damped wave equation. For this reason, we are interested in the influence of memory on some qualitative properties of solutions to the viscoelastic damped wave equation, especially, dissipative structures and asymptotic profiles. But the study of our model \eqref{VDW_Equation} is not a simple generalization of the study of the memoryless model \eqref{Eq_Visco_Dam_Wav_NOT_Memory} because the additional memory term will arise some difficulties when we employed Fourier analysis and energy method. 

Next, we turn to a related model to the Cauchy problem \eqref{VDW_Equation}. Let us introduce the Cauchy problem for the Moore-Gibson-Thompson (MGT) equation in the dissipative case as follows:
\begin{align}\label{Eq_MGT_Memory}
\begin{cases}
\tau u_{ttt}+u_{tt}-\Delta u-\Delta u_t+g\ast \Delta u=0,&x\in\mb{R}^n,\ t>0,\\
(u,u_t,u_{tt})(0,x)=(u_0,u_1,u_2)(x),&x\in\mb{R}^n,
\end{cases}
\end{align}
where the thermal relaxation $\tau\in(0,1)$ in the view of the physical context of acoustic waves. The MGT equation mainly describes the wave propagation in viscous thermally relaxing fluids. Concerning the memoryless case, i.e. $g(t)\equiv0$, the linear MGT equation has been widely studied in \cite{MooreGibson1960,Thompson1972,Kala-Tiry-1997,Gorain2010,KaltenbacherLasieckaMarchand2011,KaltenbacherLasiecka2012,MarchandMcDevittTriggiani2012,DellOroPata2017,PellicerSaiHouari2017,ChenPalmieri201901,BucciEller2020,C.-Ike-2020} and references therein. Quite recently, some memory effects in the MGT equation were found in \cite{LasieckaWang2016,LasieckaWang2015,DellOroLasieckaPata2016,Lasiecka2017,AlvesCaixetaSilvaRodrigues2018,DellOroLasieckaPata2019,BucciPandolfi,Nikoic-Said2020,Bounadja-SaidHouari2020}, where the relaxation function $g(t)$ decays exponentially or was supposed with some general decay properties. If we formally set $\tau=0$, the MGT equation with memory \eqref{Eq_MGT_Memory} will automatically shift into our model \eqref{VDW_Equation}. Therefore, it seems interesting to study the influence from the thermal relaxation $\tau$, particularly, the limit relation of solutions between our model \eqref{VDW_Equation} and the MGT equation \eqref{Eq_MGT_Memory}.  We should emphasize that there are a lot of differences between the cases $\tau>0$ and $\tau=0$, for instance, the MGT equation is a strictly hyperbolic equation (well-posedness for the Cauchy problem is well-studied) while the viscoelastic damped wave is classified into a 2-evolution equation.

Our first aim in the paper is to understand some qualitative properties of solutions to the viscoelastic damped wave equation \eqref{VDW_Equation} with the exponential decay memory \eqref{Exponential_Decay_Relax}. 
 Then, we will explain dissipative structures of the model by the mean of asymptotic expansions of eigenvalues in Section \ref{Sec_Asy_Beh_Solution}. Later, thanks to Fourier analysis, asymptotic profiles of solutions in a framework of weighted $L^1$ space will be deduced in Section \ref{Sec_Exp_Asy_Prof}, where optimal estimates of the solution in the $L^2$ norm will be derived. Furthermore, the profiles of the solution itself can be understood as a combination of the diffusion waves and the higher-order diffusion waves.

Regarding the limit case as taking $\tau=0$ in the Cauchy problem \eqref{Eq_MGT_Memory}, we can formally transfer the MGT equation with memory \eqref{Eq_MGT_Memory} to the viscoelastic damped wave with memory \eqref{VDW_Equation}. To study the convergence between their solutions as $\tau\downarrow0$, we will investigate the singular limit problem in Section \ref{Sec_Sing_Lim}. We focus on the influence from different assumptions on initial data to the convergence rate with respect to $\tau$, where the singular limit relations for standard energy and the solution itself will be established, respectively. Concerning the singular limit for the solution itself, we will construct some new energies with respect to the integral form of the solution.

\medskip

\noindent\textbf{Notation: } We give some notations to be used in this paper. Later, $c$ and $C$ denote some positive constants, which may be changed from
line to line. We denote that $f\lesssim g$ if there exists a positive constant $C$ such that $f\leqslant Cg$ and, analogously, for $f\gtrsim g$. Moreover, $\ml{I}$ denotes the identity operator such that $\ml{I}:f\to\ml{I}f=f$. Next, $|D|^s$ with $s\geqslant0$ denotes the pseudo-differential operator with the symbol $|\xi|^s$. Lastly, let us define the some zones for the Fourier space as follows:
\begin{align*}
\ml{Z}_{\intt}(\varepsilon)&:=\{\xi\in\mb{R}^n:|\xi|<\varepsilon\ll1\},\\
\ml{Z}_{\bdd}(\varepsilon,N)&:=\{\xi\in\mb{R}^n:\varepsilon\leqslant |\xi|\leqslant N\},\\
\ml{Z}_{\extt}(N)&:=\{\xi\in\mb{R}^n:|\xi|> N\gg1\}.
\end{align*}
The cut-off functions $\chi_{\intt}(\xi),\chi_{\text{bdd}}(\xi),\chi_{\extt}(\xi)\in \mathcal{C}^{\infty}(\mb{R}^n)$ having their supports in the zone $\ml{Z}_{\intt}(\varepsilon)$, $\ml{Z}_{\text{bdd}}(\varepsilon/2,2N)$ and $\ml{Z}_{\extt}(N)$, respectively, so that $\chi_{\intt}(\xi)+\chi_{\bdd}(\xi)+\chi_{\extt}(\xi)=1$.

\section{Asymptotic behavior of solutions}\label{Sec_Asy_Beh_Solution}
As a matter of fact, we may transfer the integro-differential equation \eqref{VDW_Equation} to the first-order (in time) differential coupled system. Although one may apply spectral theory associated with asymptotic analysis to derive asymptotic solution formula to that coupled system, it is still a challenging work to analyze precisely some qualitative properties of the solution itself. In order to overcome this difficulty, we will understand the Cauchy problem \eqref{VDW_Equation} according to a higher-order (in time) equation.

With the aim of treating the Cauchy problem \eqref{VDW_Equation}, we firstly apply the partial Fourier transform with respect to spatial variables such that $\hat{u}(t,\xi)=\ml{F}_{x\to\xi}(u(t,x))$. Thus, the next initial value problem for $|\xi|$-dependent integro-differential equation appears:
\begin{align}\label{VDW_Fourier}
\begin{cases}
\hat{u}_{tt}+|\xi|^2\hat{u}+|\xi|^2\hat{u}_t-|\xi|^2g\ast\hat{u}=0,&\xi\in\mb{R}^n,\ t>0,\\
(\hat{u},\hat{u}_t)(0,\xi)=(\hat{u}_0,\hat{u}_1)(\xi),&\xi\in\mb{R}^n.
\end{cases}
\end{align}
The relaxation function \eqref{Exponential_Decay_Relax} shows that $g(0)=1$, $g'(t)=-\gamma g(t)$ and, consequently, 
\begin{align}\label{Rela_01}
\frac{\mathrm{d}}{\mathrm{d}t}\left(-|\xi|^2g\ast\hat{u}\right)=-|\xi|^2\hat{u}+\gamma|\xi|^2g\ast\hat{u}.
\end{align}
We found that the solution $\hat{u}=\hat{u}(t,\xi)$ to the initial value problem
\begin{align}\label{Eq_01}
\begin{cases}
\displaystyle{\left(\frac{\mathrm{d}}{\mathrm{d}t}+\gamma\ml{I}\right)\left(\hat{u}_{tt}+|\xi|^2\hat{u}+|\xi|^2\hat{u}_t-|\xi|^2g\ast\hat{u}\right)=0,}&\xi\in\mb{R}^n,\ t>0,\\
(\hat{u},\hat{u}_t,\hat{u}_{tt})(0,\xi)=(\hat{u}_0,\hat{u}_1,\hat{u}_2)(\xi),&\xi\in\mb{R}^n,
\end{cases}
\end{align}
equipping $\hat{u}_2(\xi):=-|\xi|^2(\hat{u}_0(\xi)+\hat{u}_1(\xi))$, is exactly the solution to the initial value problem \eqref{VDW_Fourier}. This statement can be proved by multiplying the equation in \eqref{Eq_01} by $\mathrm{e}^{\gamma t}$ and integrating the resultant equation over $[0,t]$. 
In other words, with the help of \eqref{Rela_01}, we will study the qualitative properties of solutions to \eqref{VDW_Equation} by considering the next third-order initial value problem:
\begin{align}\label{Eq_02}
\begin{cases}
\hat{u}_{ttt}+\left(|\xi|^2+\gamma\right)\hat{u}_{tt}+(1+\gamma)|\xi|^2\hat{u}_t+(\gamma-1)|\xi|^2\hat{u}=0,&\xi\in\mb{R}^n,\ t>0,\\
(\hat{u},\hat{u}_t,\hat{u}_{tt})(0,\xi)=(\hat{u}_0,\hat{u}_1,\hat{u}_2)(\xi),&\xi\in\mb{R}^n.
\end{cases}
\end{align}
\subsection{Asymptotic expansions of eigenvalues}
One may found that the characteristic equation to the equation in the initial value problem \eqref{Eq_02} is given by
\begin{align}\label{Eq_Eigen}
\lambda^3+(|\xi|^2+\gamma)\lambda^2+(1+\gamma)|\xi|^2\lambda+(\gamma-1)|\xi|^2=0,
\end{align}
whose roots can be written by $\lambda_j=\lambda_j(|\xi|)$ with $j=1,2,3$. Although Cardano's formula is effective to solve a cubic equation explicitly, it seems a challenging work to analyze the behavior of solutions by using these roots of complex forms. To overcome this difficulty, we will apply asymptotic expansions of eigenvalues for $\xi\in\ml{Z}_{\intt}(\varepsilon)$ as well as $\xi\in\ml{Z}_{\extt}(N)$, respectively.

Let us explain nature of the roots of the cubic equation \eqref{Eq_Eigen}. The corresponding discriminant $\triangle_{\mathrm{Cub}}$ is given by
\begin{align*}
\triangle_{\mathrm{Cub}}&=|\xi|^2\left((\gamma^2-2\gamma+5)|\xi|^6-2(\gamma^3+\gamma^2-\gamma+11)|\xi|^4\right.\\
&\quad\left.+(\gamma^4+8\gamma^3-14\gamma^2+36\gamma-27)|\xi|^2-4\gamma^3(\gamma-1)\right).
\end{align*}
For one thing, concerning $\xi\in\ml{Z}_{\intt}(\varepsilon)$ with $\varepsilon\ll 1$, due to $-\gamma^3(\gamma-1)<0$ for all $\gamma>1$, we conclude $\triangle_{\mathrm{Cub}}<0$. For another, concerning $\xi\in\ml{Z}_{\extt}(N)$ with $N\gg1$, due to $\gamma^2-2\gamma+5>0$ for all $\gamma>1$, we claim $\triangle_{\mathrm{Cub}}>0$. It means that for small frequencies, the cubic equation \eqref{Eq_Eigen} has one real root and two non-real complex conjecture roots, while for large frequencies, the cubic equation \eqref{Eq_Eigen} has three distinct real roots. Finally, the case for multiple roots, i.e. $\triangle_{\mathrm{Cub}}=0$, appears when $\xi\in\ml{Z}_{\mathrm{bdd}}(\varepsilon,N)$ only.\\

\noindent\textbf{Part 1. Asymptotic expansions for small frequencies}\\
 We derive the eigenvalues $\lambda_j(|\xi|)$ with $j=1,2,3$ owing the asymptotic expansions for $\xi\in\ml{Z}_{\intt}(\varepsilon)$ such that
\begin{align}\label{Expansion_small}
\lambda_j(|\xi|)=\lambda_j^{(0)}+\lambda_j^{(1)}|\xi|+\lambda_j^{(2)}|\xi|^2+\lambda_j^{(3)}|\xi|^3+\cdots,
\end{align}
where the coefficients $\lambda_j^{(k)}\in\mb{C}$ for all $k\in\mb{N}_0$. Let us plug \eqref{Expansion_small} into \eqref{Eq_Eigen} and process lengthy but straightforward calculations, until different characteristic roots appear. Consequently, we have
\begin{align*}
\lambda_{1,2}(|\xi|)&=\pm i\sqrt{\frac{\gamma-1}{\gamma}}|\xi|-\frac{\gamma^2+1}{2\gamma^2}|\xi|^2+\ml{O}(|\xi|^3),\\
\lambda_3(|\xi|)&=-\gamma+\frac{1}{\gamma^2}|\xi|^2+\ml{O}(|\xi|^3),
\end{align*}
with $\xi\in\ml{Z}_{\intt}(\varepsilon)$. Here, $\gamma>1$ was used. Particularly, formally taking $\gamma\to\infty$, the principal parts of $\lambda_{1}(|\xi|)$ and $\lambda_2(|\xi|)$ correspond to those for the viscoelastic damped wave equation without memory (see \cite{D'A-Re-2014,Ike-2014}).\\

\noindent\textbf{Part 2. Asymptotic expansions for large frequencies}\\
In the case $\xi\in\ml{Z}_{\extt}(N)$, the eigenvalues have the asymptotic expansions such that
\begin{align}\label{Expansion_large}
\lambda_j(|\xi|)=\bar{\lambda}_j^{(0)}|\xi|^2+\bar{\lambda}_j^{(1)}|\xi|+\bar{\lambda}_j^{(0)}+\bar{\lambda}_j^{(3)}|\xi|^{-1}+\cdots,
\end{align}
where the coefficients $\bar{\lambda}_j^{(k)}\in\mb{C}$ for all $k\in\mb{N}_0$. By substituting \eqref{Expansion_large} into \eqref{Eq_Eigen}, we may arrive at
\begin{align*}
\lambda_{1,2}(|\xi|)&=-\frac{1+\gamma\pm\sqrt{(1+\gamma)^2-4(\gamma-1)}}{2}+\ml{O}(|\xi|^{-1}),\\
\lambda_3(|\xi|)&=-|\xi|^2+1+\ml{O}(|\xi|^{-1}),
\end{align*}
with $\xi\in\ml{Z}_{\extt}(N)$. Particularly, formally taking $\gamma\to\infty$, the principal parts of eigenvalues $\lambda_{2}(|\xi|)$ and $\lambda_3(|\xi|)$ for $\xi\in\ml{Z}_{\extt}(N)$ correspond to those for the viscoelastic damped wave equation without memory (see \cite{D'A-Re-2014,Ike-2014}), where we applied
\begin{align*}
\lim\limits_{\gamma\to\infty}\frac{1+\gamma-\sqrt{(1+\gamma)^2-4(\gamma-1)}}{2}=\lim\limits_{\gamma\to\infty}\frac{2(\gamma-1)}{1+\gamma+\sqrt{(1+\gamma)^2-4(\gamma-1)}}=1.
\end{align*}

\noindent\textbf{Part 3. Stability of eigenvalues for bounded frequencies}\\
Considering frequencies localizing in the bounded zone $\ml{Z}_{\bdd}(\varepsilon,N)$, we claim that $\mathrm{Re}\,\lambda_j(|\xi|)<0$ for all $j=1,2,3$. Let us prove this assertion by a contradiction argument. Let us assume there is a nontrivial root $\lambda=id$ to the characteristic equation \eqref{Eq_Eigen}. Namely,
\begin{align*}
-(|\xi|^2+\gamma)d^2+(\gamma-1)|\xi|^2=0\ \ \mbox{and}\ \ -id(d^2-(1+\gamma)|\xi|^2)=0.
\end{align*}
Due to $d\neq0$, they yield a contradiction immediately. Hence, there does not exist any pure imaginary roots to the cubic equation \eqref{Eq_Eigen}. According to the compactness of the bounded zone $\ml{Z}_{\bdd}(\varepsilon,N)$ and the continuity of the eigenvalues $\lambda_j(|\xi|)$, we are able to assert that
\begin{align}\label{BDD}
\mathrm{Re}\,\lambda_j(|\xi|)<0\ \ \mbox{for}\ \ j=1,2,3
\end{align}
due to $\mathrm{Re}\,\lambda_j(|\xi|)<0$ for $j=1,2,3$ and $\xi\in\ml{Z}_{\intt}(\varepsilon)\cup\ml{Z}_{\extt}(N)$.

\subsection{Pointwise estimates in the Fourier space}
In the last subsection, we have obtained some asymptotic behaviors and stabilities of eigenvalues in different zones, which bring some useful tools for us to derive some representations of solutions. According to Part 1 and Part 2 in the above discussions, we may observe that the pairwise distinct eigenvalues appear for $\xi\in\ml{Z}_{\intt}(\varepsilon)\cup\ml{Z}_{\extt}(N)$. For this reason, by making use of the representation for initial data $\hat{u}_2(\xi)$, the solution to \eqref{Eq_02} can be represented by
\begin{align}\label{Represent_Solution}
\hat{u}(t,\xi)=\widehat{K}_0(t,|\xi|)\hat{u}_0(\xi)+\widehat{K}_1(t,|\xi|)\hat{u}_1(\xi),
\end{align}
where the kernels are
\begin{align*}
\widehat{K}_0(t,|\xi|)&:=\sum\limits_{j=1,2,3}\frac{\prod\nolimits_{k=1,2,3,\ k\neq j}\lambda_k(|\xi|)-|\xi|^2}{\prod\nolimits_{k=1,2,3,\ k\neq j}(\lambda_j(|\xi|)-\lambda_k(|\xi|))}\mathrm{e}^{\lambda_j(|\xi|)t},\\
\widehat{K}_1(t,|\xi|)&:=\sum\limits_{j=1,2,3}\frac{-\sum\nolimits_{k=1,2,3,\ k\neq j}\lambda_k(|\xi|)-|\xi|^2}{\prod\nolimits_{k=1,2,3,\ k\neq j}(\lambda_j(|\xi|)-\lambda_k(|\xi|))}\mathrm{e}^{\lambda_j(|\xi|)t}.
\end{align*}
\begin{remark}
If one is interested in a general assumption for the relaxation function $g(t)$ such that
\begin{align}\label{General_g(t)}
g(t)>0\ \ \mbox{and} \ \ g'(t)\leqslant-\tilde{c}g(t)\ \ \mbox{for any}\ \ t\geqslant0,
\end{align}
we cannot derive the representation of solutions by following the same approach as the above discussion. At this time, one may apply the Laplace transform with respect to the time variable $\ml{L}_{t\to m}(\hat{u})(m,\xi)=\ml{L}_{t\to m}(\hat{u}(t,\xi))$. By this way, we get
\begin{align*}
\widehat{K}_0(t,|\xi|)&=\ml{L}^{-1}_{m\to t}\left(\frac{m+|\xi|^2}{(m+1)|\xi|^2+m^2-|\xi|^2\ml{L}(g)(m)}\right),\\
\widehat{K}_1(t,|\xi|)&=\ml{L}^{-1}_{m\to t}\left(\frac{1}{(m+1)|\xi|^2+m^2-|\xi|^2\ml{L}(g)(m)}\right).
\end{align*}
Setting $\ml{F}_0(m;|\xi|):=(m+1)|\xi|^2+m^2-|\xi|^2\ml{L}(g)(m)$ with $m\in\mb{C}$, we find that $(m+|\xi|^2)/\ml{F}_0(m;|\xi|)$ and $1/\ml{F}_0(m;|\xi|)$ are analytic in the range
\begin{align*}
\mb{D}:=\left\{m\in\mb{C}:\ \mathrm{Re}\,m>\max\left\{\frac{-(\tilde{c}+1)+\sqrt{(\tilde{c}-1)^2+4g(0)}}{2},0 \right\} \right\}.
\end{align*}
Moreover, due to the assumption \eqref{General_g(t)}, if $\mathrm{Re}\, m>-\tilde{c}$, there exists $\ml{L}(g)(m)$, where we used
\begin{align*}
|\ml{L}(g)(m)|\leqslant\frac{g(0)}{\mathrm{Re}\,m+\tilde{c}}.
\end{align*}
Finally, by following the similar procedure to those in Section 2.1 in \cite{Liu-Ueda-2020}, the existences of $\widehat{K}_0(t,|\xi|)$ and $\widehat{K}_1(t,|\xi|)$ can be proved.
\end{remark}

It is well-known that dissipative properties can be characterized by pointwise estimates in the Fourier space. Let us introduce some useful pointwise estimates on $|\hat{u}(t,\xi)|$ and $|\hat{u}_t(t,\xi)|$, respectively.
\begin{prop}\label{Prop_Pointwise_Est}
	The solution $\hat{u}=\hat{u}(t,\xi)$ to the initial value problem \eqref{Eq_02} with the relaxation function \eqref{Exponential_Decay_Relax} fulfills the following estimates:
	\begin{align}\label{Point_wise_small}
	\chi_{\intt}(\xi)|\hat{u}(t,\xi)|&\lesssim\chi_{\intt}(\xi)\left(\left(|\cos(\tilde{\gamma}|\xi|t)|+|\xi|\,|\sin(\tilde{\gamma}|\xi|t)|\right)\mathrm{e}^{-\frac{\gamma^2+1}{2\gamma^2}|\xi|^2t}+|\xi|^2\mathrm{e}^{-\gamma t}\right)|\hat{u}_0(\xi)|\notag\\
	&\quad+\chi_{\intt}(\xi)\left(\left(|\xi|^2|\cos(\tilde{\gamma}|\xi|t)|+\frac{|\sin(\tilde{\gamma}|\xi|t)|}{|\xi|}\right)\mathrm{e}^{-\frac{\gamma^2+1}{2\gamma^2}|\xi|^2t}+|\xi|^2\mathrm{e}^{-\gamma t}\right)|\hat{u}_1(\xi)|,
	\end{align}
	where $\tilde{\gamma}:=\sqrt{(\gamma-1)/\gamma}>0$, and
	\begin{align}\label{Point_wise_large}
	\chi_{\extt}(\xi)|\hat{u}(t,\xi)|\lesssim\chi_{\extt}(\xi)\left(\mathrm{e}^{-ct}+\frac{1}{|\xi|^2}\mathrm{e}^{-|\xi|^2t}\right)|\hat{u}_0(\xi)|+\chi_{\extt}(\xi)\frac{1}{|\xi|^2}\left(\mathrm{e}^{-ct}+\mathrm{e}^{-|\xi|^2t}\right)|\hat{u}_1(\xi)|,
	\end{align}
	moreover, it holds that
	\begin{align}
	\chi_{\bdd}(\xi)|\hat{u}(t,\xi)|\lesssim\chi_{\bdd}(\xi)\mathrm{e}^{-ct}(|u_0(\xi)|+|u_1(\xi)|)\label{Point_wise_bdd}
	\end{align}
	for some constants $c>0$.
\end{prop}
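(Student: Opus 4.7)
My plan is to exploit the explicit representation
\[
\hat{u}(t,\xi) = \widehat{K}_0(t,|\xi|)\hat{u}_0(\xi) + \widehat{K}_1(t,|\xi|)\hat{u}_1(\xi)
\]
derived just before the proposition, and to estimate each kernel zone-by-zone using the asymptotic expansions already obtained for the eigenvalues $\lambda_j(|\xi|)$ and the stability bound \eqref{BDD} on the bounded zone. The three zone estimates \eqref{Point_wise_small}, \eqref{Point_wise_large}, \eqref{Point_wise_bdd} are structurally independent, so I would handle them in parallel but by the same method: multiplying by the corresponding cut-off $\chi_{\intt}, \chi_{\extt}, \chi_{\bdd}$ and replacing each $\mathrm{e}^{\lambda_j(|\xi|)t}$ by its sharp exponential majorant coming from the asymptotic expansion, while bounding the coefficients
\[
A_j(|\xi|):=\frac{\prod_{k\neq j}\lambda_k(|\xi|)-|\xi|^2}{\prod_{k\neq j}(\lambda_j(|\xi|)-\lambda_k(|\xi|))},\qquad B_j(|\xi|):=\frac{-\sum_{k\neq j}\lambda_k(|\xi|)-|\xi|^2}{\prod_{k\neq j}(\lambda_j(|\xi|)-\lambda_k(|\xi|))}
\]
using the leading orders of $\lambda_j$ in that zone.

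For small frequencies ($\xi\in\ml{Z}_{\intt}(\varepsilon)$), I would first note that $\lambda_{1,2}$ are complex conjugates and pair the $j=1,2$ contributions so that $\mathrm{e}^{\lambda_1 t}\pm \mathrm{e}^{\lambda_2 t}$ collapses to $2\cos(\tilde\gamma|\xi|t)\mathrm{e}^{-\frac{\gamma^2+1}{2\gamma^2}|\xi|^2 t}$ or $2i\sin(\tilde\gamma|\xi|t)\mathrm{e}^{-\frac{\gamma^2+1}{2\gamma^2}|\xi|^2 t}$ up to $\mathcal{O}(|\xi|^3 t)$ errors that are absorbed in $|\xi|$-prefactors. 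Using $\lambda_1-\lambda_2=2i\tilde\gamma|\xi|+\mathcal{O}(|\xi|^3)$ and $\lambda_j-\lambda_3=\gamma+\mathcal{O}(|\xi|)$ for $j=1,2$, one reads off that the $j=1,2$ coefficients of $\widehat{K}_0$ are $\tfrac{1}{2}+\mathcal{O}(|\xi|)$ and those of $\widehat{K}_1$ are $\mathcal{O}(|\xi|^{-1})$; the numerator $\lambda_1\lambda_2-|\xi|^2\sim -|\xi|^2/\gamma$ in the $j=3$ term of $\widehat{K}_0$, together with denominator $\sim\gamma^2$ and exponential $\mathrm{e}^{-\gamma t}$, produces the $|\xi|^2\mathrm{e}^{-\gamma t}$ contribution, and the same accounting for $\widehat{K}_1$ yields the $|\xi|^2\mathrm{e}^{-\gamma t}$ factor in front of $|\hat u_1|$.

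For large frequencies ($\xi\in\ml{Z}_{\extt}(N)$), I would use $\lambda_{1,2}=-c_\pm+\mathcal{O}(|\xi|^{-1})$ with $c_\pm>0$ and $\lambda_3=-|\xi|^2+1+\mathcal{O}(|\xi|^{-1})$ to get $\lambda_1-\lambda_2=\mathcal{O}(1)$ (bounded away from $0$ since $(1+\gamma)^2-4(\gamma-1)=(\gamma-1)^2+4>0$) and $\lambda_{1,2}-\lambda_3\sim|\xi|^2$. Substituting into $A_j, B_j$ one sees that the $j=1,2$ coefficients of $\widehat{K}_0$ are $\mathcal{O}(1)$ while those of $\widehat{K}_1$ pick up a $|\xi|^{-2}$ factor from the $\lambda_3$ in the denominator; the $j=3$ contribution carries $\mathrm{e}^{-|\xi|^2 t}$ and an $|\xi|^{-2}$ from $1/\prod(\lambda_3-\lambda_k)$, producing exactly \eqref{Point_wise_large}. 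For the bounded zone the proof is essentially free: by \eqref{BDD} the three continuous functions $\mathrm{Re}\,\lambda_j(|\xi|)$ are strictly negative on the compact set $\ml{Z}_{\bdd}(\varepsilon,N)$, hence bounded above by some $-c<0$; the coefficients $A_j,B_j$ are continuous and bounded there (distinctness of roots on a possibly smaller subset is not even needed since one can instead use an ODE-energy argument, or split further at the multiple-root set), which immediately gives \eqref{Point_wise_bdd}.

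The main obstacle will be the small-frequency computation, because the denominator $\lambda_1-\lambda_2$ vanishes like $|\xi|$ as $|\xi|\to 0$, so the coefficients $B_{1,2}(|\xi|)$ behave like $|\xi|^{-1}$ and must be combined very carefully with the oscillatory exponential pair to produce the bounded $\sin$/$|\xi|$ structure in \eqref{Point_wise_small} without losing any power of $|\xi|$. I would therefore keep two terms in the expansion of each $\lambda_j$ throughout and check that the $\mathcal{O}(|\xi|^3)$ tails contribute only terms already dominated by the factors displayed on the right-hand side of \eqref{Point_wise_small} (using, for instance, $|\mathrm{e}^{\mathcal{O}(|\xi|^3)t}-1|\lesssim|\xi|^3 t\lesssim|\xi|$ absorbed into the $|\xi|^2\mathrm{e}^{-\frac{\gamma^2+1}{2\gamma^2}|\xi|^2 t}$ or $|\xi|\mathrm{e}^{-\frac{\gamma^2+1}{2\gamma^2}|\xi|^2 t}$ majorants after using $s\mathrm{e}^{-cs}\lesssim 1$); once the bookkeeping is done the three estimates follow.
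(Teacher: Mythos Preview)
Your proposal is correct and follows essentially the same approach as the paper: both arguments split into the three frequency zones, substitute the asymptotic expansions of $\lambda_j(|\xi|)$ into the coefficients $A_j,B_j$ of the representation \eqref{Represent_Solution}, pair the complex-conjugate contributions $j=1,2$ in the small-frequency zone to extract the $\cos/\sin$ structure, and invoke \eqref{BDD} plus compactness on the bounded zone. Your identification of the $|\xi|^{-1}$ singularity in $B_{1,2}$ as the delicate point is exactly right, and your handling of the $\ml{O}(|\xi|^3)t$ remainder in the exponent (via $s\mathrm{e}^{-cs}\lesssim 1$) is in fact slightly more explicit than what the paper writes in this proof---the paper simply says ``direct computations'' here and only spells out that remainder estimate later, in the proof of Theorem~\ref{Thm_DIFF}.
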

\begin{proof} We will divide the proof into three parts with respect to the size of frequencies.\\
	
	\noindent\textbf{Part 1: Estimates  of solutions for small frequencies}\\
	By using the result of asymptotic expansions of eigenvalues, we may derive for $\xi\in\ml{Z}_{\intt}(\varepsilon)$ that
	\begin{align*}
	\widehat{K}_0(t,|\xi|)=\widehat{K}_{0,1}(t,|\xi|)+\widehat{K}_{0,2}(t,|\xi|)+\widehat{K}_{0,3}(t,|\xi|),
	\end{align*}
	where
	\begin{align*}
	\widehat{K}_{0,1/2}(t,|\xi|)&:=\frac{\pm i\sqrt{\gamma(\gamma-1)}|\xi|+\frac{(\gamma-1)^2}{2\gamma}|\xi|^2+\ml{O}(|\xi|^3)}{\pm 2i\sqrt{\gamma(\gamma-1)}|\xi|-\frac{2(\gamma-1)}{\gamma}|\xi|^2+\ml{O}(|\xi|^3)}\mathrm{e}^{\pm i\sqrt{\frac{\gamma-1}{\gamma}}|\xi|t-\frac{\gamma^2+1}{2\gamma^2}|\xi|^2t+\ml{O}(|\xi|^3)t},\\
	\widehat{K}_{0,3}(t,|\xi|)&:=\frac{-\frac{1}{\gamma}|\xi|^2+\ml{O}(|\xi|^3)}{(\gamma-\frac{\gamma^2+3}{\gamma^2}|\xi|^2)^2+\frac{\gamma-1}{\gamma}|\xi|^2+\ml{O}(|\xi|^3)}\mathrm{e}^{-\gamma t+\frac{1}{\gamma^2}|\xi|^2t+\ml{O}(|\xi|^3)t}.
	\end{align*}
	Carrying out some direct computations, one gets
	\begin{align*}
	\chi_{\intt}(\xi)|\widehat{K}_0(t,|\xi|)|\lesssim\chi_{\intt}(\xi)\left(\left|\cos\left(\sqrt{\tfrac{\gamma-1}{\gamma}}|\xi|t\right)\right|+|\xi|\left|\sin\left(\sqrt{\tfrac{\gamma-1}{\gamma}}|\xi|t\right)\right|\right)\mathrm{e}^{-\frac{\gamma^2+1}{2\gamma^2}|\xi|^2t}+\chi_{\intt}(\xi)|\xi|^2\mathrm{e}^{-\gamma t}.
	\end{align*}
	Secondly, we also can decompose the kernel by
	\begin{align*}
	\widehat{K}_1(t,|\xi|)=\widehat{K}_{1,1}(t,|\xi|)+\widehat{K}_{1,2}(t,|\xi|)+\widehat{K}_{1,3}(t,|\xi|),
	\end{align*}
	with the following components:
	\begin{align*}
	\widehat{K}_{1,1/2}(t,|\xi|)&:=\frac{\gamma\pm i\sqrt{\frac{\gamma-1}{\gamma}}|\xi|-\frac{\gamma^2+1}{2\gamma^2}|\xi|^2+\ml{O}(|\xi|^3)}{\pm 2i\sqrt{\gamma(\gamma-1)}|\xi|-\frac{2(\gamma-1)}{\gamma}|\xi|^2+\ml{O}(|\xi|^3)}\mathrm{e}^{\pm i\sqrt{\frac{\gamma-1}{\gamma}}|\xi|t-\frac{\gamma^2+1}{2\gamma^2}|\xi|^2t+\ml{O}(|\xi|^3)t},\\
	\widehat{K}_{1,3}(t,|\xi|)&:=\frac{\frac{1}{\gamma^2}|\xi|^2+\ml{O}(|\xi|^3)}{(\gamma-\frac{\gamma^2+3}{\gamma^2}|\xi|^2)^2+\frac{\gamma-1}{\gamma}|\xi|^2+\ml{O}(|\xi|^3)}\mathrm{e}^{-\gamma t+\frac{1}{\gamma^2}|\xi|^2t+\ml{O}(|\xi|^3)t}.
	\end{align*}
	Thus, the estimate holds
	\begin{align*}
	\chi_{\intt}(\xi)|\widehat{K}_1(t,|\xi|)|\lesssim\chi_{\intt}(\xi)\left(|\xi|^2\left|\cos\left(\sqrt{\tfrac{\gamma-1}{\gamma}}|\xi|t\right)\right|+\frac{\left|\sin\left(\sqrt{\tfrac{\gamma-1}{\gamma}}|\xi|t\right)\right|}{|\xi|}\right)\mathrm{e}^{-\frac{\gamma^2+1}{2\gamma^2}|\xi|^2t}+\chi_{\intt}(\xi)|\xi|^2\mathrm{e}^{-\gamma t}.
	\end{align*}
	Finally, by applying the representation \eqref{Represent_Solution}, we arrive at \eqref{Point_wise_small}.\\
	
	\noindent\textbf{Part 2: Estimates  of solutions for large frequencies}\\
	At this time, the principal parts of the eigenvalues are real number. For this reason, we may easy to deduce the following estimates:
	\begin{align*}
	\chi_{\extt}(\xi)|\widehat{K}_0(t,|\xi|)|&\lesssim\chi_{\extt}(\xi)\left(\mathrm{e}^{-\frac{1+\gamma+\sqrt{(1+\gamma)^2-4(\gamma-1)}}{2}t}+\mathrm{e}^{-\frac{1+\gamma-\sqrt{(1+\gamma)^2-4(\gamma-1)}}{2}t}+\frac{1}{|\xi|^2}\mathrm{e}^{-|\xi|^2t}\right),\\
	\chi_{\extt}(\xi)|\widehat{K}_1(t,|\xi|)|&\lesssim\chi_{\extt}(\xi)\frac{1}{|\xi|^2}\left(\mathrm{e}^{-\frac{1+\gamma+\sqrt{(1+\gamma)^2-4(\gamma-1)}}{2}t}+\mathrm{e}^{-\frac{1+\gamma-\sqrt{(1+\gamma)^2-4(\gamma-1)}}{2}t}+\mathrm{e}^{-|\xi|^2t}\right).
	\end{align*}
	Again, combining the above two estimates with \eqref{Represent_Solution}, we get our desired estimate \eqref{Point_wise_large}.\\
	
	\noindent\textbf{Part 3: Estimates of solutions for bounded frequencies}\\
	According to our conclusion \eqref{BDD} and the fact that $\varepsilon\leqslant|\xi|\leqslant N$, in the cases for multiple root or non-multiple root, we can estimate
	\begin{align*}
	\chi_{\bdd}(\xi)|\widehat{K}_0(t,|\xi|)|+\chi_{\bdd}(\xi)|\widehat{K}_1(t,|\xi|)|\lesssim \chi_{\bdd}(\xi)\mathrm{e}^{-ct}
	\end{align*}
	for some positive constants $c$. It allows us complete the proof.
\end{proof}

\begin{prop}\label{Prop_Pointwise_Est_2}
The solution $\hat{u}=\hat{u}(t,\xi)$ to the initial value problem \eqref{Eq_02} with the relaxation function \eqref{Exponential_Decay_Relax} fulfills the following estimates:
	\begin{align*}
	\chi_{\intt}(\xi)|\hat{u}_t(t,\xi)|&\lesssim\chi_{\intt}(\xi)\left(\left(|\xi|^2|\cos(\tilde{\gamma}|\xi|t)|+|\xi|\,|\sin(\tilde{\gamma}|\xi|t)|\right)\mathrm{e}^{-\frac{\gamma^2+1}{2\gamma^2}|\xi|^2t}+|\xi|^2\mathrm{e}^{-\gamma t}\right)|\hat{u}_0(\xi)|\notag\\
	&\quad+\chi_{\intt}(\xi)\left(\left(|\cos(\tilde{\gamma}|\xi|t)|+|\xi|^3|\sin(\tilde{\gamma}|\xi|t)|\right)\mathrm{e}^{-\frac{\gamma^2+1}{2\gamma^2}|\xi|^2t}+|\xi|^2\mathrm{e}^{-\gamma t}\right)|\hat{u}_1(\xi)|,
	\end{align*}
	where $\tilde{\gamma}:=\sqrt{(\gamma-1)/\gamma}>0$, and
	\begin{align*}
	\chi_{\extt}(\xi)|\hat{u}_t(t,\xi)|\lesssim\chi_{\extt}(\xi)\left(\mathrm{e}^{-ct}+\mathrm{e}^{-|\xi|^2t}\right)|\hat{u}_0(\xi)|+\chi_{\extt}(\xi)\frac{1}{|\xi|^2}\left(\mathrm{e}^{-ct}+|\xi|^2\mathrm{e}^{-|\xi|^2t}\right)|\hat{u}_1(\xi)|,
	\end{align*}
	moreover, it holds that
	\begin{align*}
	\chi_{\bdd}(\xi)|\hat{u}_t(t,\xi)|\lesssim\chi_{\bdd}(\xi)\mathrm{e}^{-ct}(|u_0(\xi)|+|u_1(\xi)|)
	\end{align*}
	for some constants $c>0$.
\end{prop}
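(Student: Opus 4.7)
The plan is to mirror the three-part structure used in the proof of Proposition~\ref{Prop_Pointwise_Est}. Starting from the representation \eqref{Represent_Solution}, I differentiate in time and write
\[
\hat u_t(t,\xi)=\partial_t\widehat{K}_0(t,|\xi|)\,\hat u_0(\xi)+\partial_t\widehat{K}_1(t,|\xi|)\,\hat u_1(\xi).
\]
Since each kernel decomposes as $\widehat{K}_\ell=\widehat{K}_{\ell,1}+\widehat{K}_{\ell,2}+\widehat{K}_{\ell,3}$ with $\widehat{K}_{\ell,j}$ carrying the factor $\mathrm{e}^{\lambda_j(|\xi|)t}$ (as already made explicit in the proof of Proposition~\ref{Prop_Pointwise_Est}), one has $\partial_t\widehat{K}_{\ell,j}=\lambda_j(|\xi|)\widehat{K}_{\ell,j}$, so the whole task reduces to controlling $|\lambda_j(|\xi|)\widehat{K}_{\ell,j}(t,|\xi|)|$ in each of the three zones via the expansions and stability results from Section~2.1.

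For $\xi\in\ml{Z}_{\intt}(\varepsilon)$ the expansions give $\lambda_{1,2}(|\xi|)=\pm i\tilde\gamma|\xi|+\ml{O}(|\xi|^2)$ and $\lambda_3(|\xi|)=-\gamma+\ml{O}(|\xi|^2)$. Multiplying by the purely imaginary leading factor $\pm i\tilde\gamma|\xi|$ interchanges the cosine and sine contributions of the oscillatory conjugate pair $\widehat{K}_{\ell,1}+\widehat{K}_{\ell,2}$ while introducing an extra power of $|\xi|$; concretely, the identity
\[
\partial_t\bigl[(c_1\cos(bt)-c_2\sin(bt))\,\mathrm{e}^{at}\bigr]=\mathrm{e}^{at}\bigl[(ac_1-bc_2)\cos(bt)-(ac_2+bc_1)\sin(bt)\bigr],
\]
with $a=\ml{O}(|\xi|^2)$ and $b=\tilde\gamma|\xi|+\ml{O}(|\xi|^3)$, upgrades the bound $|\cos|+|\xi|\,|\sin|$ for $\widehat{K}_0$ into $|\xi|^2|\cos|+|\xi|\,|\sin|$, and the bound $|\xi|^2|\cos|+|\sin|/|\xi|$ for $\widehat{K}_1$ into $|\cos|+|\xi|^3|\sin|$. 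The non-oscillatory component $\widehat{K}_{\ell,3}\lesssim|\xi|^2\mathrm{e}^{-\gamma t}$ is simply multiplied by $|\lambda_3|=\gamma+\ml{O}(|\xi|^2)$, which is harmless.

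For $\xi\in\ml{Z}_{\extt}(N)$ the expansions give $|\lambda_{1,2}(|\xi|)|\lesssim 1$ and $|\lambda_3(|\xi|)|\lesssim|\xi|^2$. Hence the $\lambda_1,\lambda_2$-branches of $\partial_t\widehat{K}_\ell$ retain the exponential bound $\mathrm{e}^{-ct}$ already established for $\widehat{K}_\ell$, while the $\lambda_3$-branch gains an extra factor $|\xi|^2$: the contribution $|\xi|^{-2}\mathrm{e}^{-|\xi|^2t}$ present in $\widehat{K}_0$ becomes $\mathrm{e}^{-|\xi|^2t}$ in $\partial_t\widehat{K}_0$, and the $|\xi|^{-2}\mathrm{e}^{-|\xi|^2t}$ contribution in $\widehat{K}_1$ is promoted to $|\xi|^{-2}\cdot|\xi|^2\mathrm{e}^{-|\xi|^2t}$ in $\partial_t\widehat{K}_1$, matching the stated expressions. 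The bounded-frequency estimate follows at once from \eqref{BDD}, the compactness of $\ml{Z}_{\bdd}(\varepsilon,N)$, and the continuity of $|\lambda_j(|\xi|)|$, which therefore remains uniformly bounded on that zone.

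The main obstacle I anticipate lies in the small-frequency bookkeeping: both the numerator and the denominator of the complex coefficients $\widehat{K}_{\ell,1/2}$ vanish like $|\xi|$ at the origin, so after multiplication by $\lambda_{1,2}\sim|\xi|$ one must track the leading-order cancellations carefully to verify that the $\cos/\sin$ swap really produces exactly the powers of $|\xi|$ claimed in the statement and that no spurious negative power of $|\xi|$ is generated. Once this is carried out on the explicit expressions for $\widehat{K}_{0,j}$ and $\widehat{K}_{1,j}$ already displayed in the proof of Proposition~\ref{Prop_Pointwise_Est}, the other two zones reduce to a mechanical application of the eigenvalue asymptotics together with the stability bound \eqref{BDD}.
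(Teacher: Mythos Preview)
Your proposal is correct and follows essentially the same approach as the paper. In fact, the paper's own proof is nothing more than the one-line remark that $\partial_t\widehat{K}_j(t,|\xi|)=\sum_{k=1,2,3}\lambda_k(|\xi|)\,\widehat{K}_{j,k}(t,|\xi|)$ and an appeal to the procedures of Proposition~\ref{Prop_Pointwise_Est}; your three-zone discussion simply spells out what that appeal entails.
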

\begin{proof}
	By following the similar procedures to those in Proposition \ref{Prop_Pointwise_Est} and using
	\begin{align*}
	\partial_t\widehat{K}_j(t,|\xi|)=\sum\limits_{k=1,2,3}\lambda_k(|\xi|)\widehat{K}_{j,k}(t,|\xi|)
	\end{align*}
	for $j=0,1$, the desired estimates can be proved.
\end{proof}


\section{Asymptotic profiles in a framework of weighted $L^1$ space}\label{Sec_Exp_Asy_Prof}
In this section, we will study asymptotic profiles of solutions to the viscoelastic damped wave equation \eqref{VDW_Equation} with weighted $L^1$ data, where, the weighted $L^1$ space is defined by
\begin{align*}
L^{1,1}(\mb{R}^n):=\left\{f\in L^1(\mb{R}^n):\ \|f\|_{L^{1,1}(\mb{R}^n)}:=\int_{\mb{R}^n}(1+|x|)|f(x)|\mathrm{d}x<\infty\right\}.
\end{align*}
Moreover, the notation of integral for a function is $P_f:=\int_{\mb{R}^n}f(x)\mathrm{d}x$. Therefore, one observes that
\begin{align}\label{Pf_L11}
|P_f|\leqslant\int_{\mb{R}^n}|f(x)|\mathrm{d}x\leqslant\|f\|_{L^{1,1}(\mb{R}^n)}.
\end{align}
Precisely, we will deduce some estimates of upper bounds for solutions in Subsection \ref{Sub_Sec_UPPER} firstly. To show the optimality of the derived result, by using Fourier analysis, we estimate lower bounds for solution in Subsection \ref{Sub_Sec_LOWER}. In the procedure, the second-order expansion of the solution to the Cauchy problem \eqref{VDW_Equation} will be given.

\subsection{Estimates of upper bounds for solutions}\label{Sub_Sec_UPPER}
Before deducing some estimates of solutions, let us introduce some useful estimates in the $L^2$ norm, whose proof can be found in Theorem 2.2 of \cite{C.-Ike-2020}.
\begin{lemma}\label{Prop_Kernel_Est}
	Let $s\geqslant0$, $\gamma>1$ and $\tilde{\gamma}>0$. The following estimates hold:
	\begin{align*}
	\left\|\chi_{\intt}(\xi)|\xi|^s|\cos(\tilde{\gamma}|\xi|t)|\mathrm{e}^{-\frac{\gamma^2+1}{2\gamma^2}|\xi|^2t}\right\|_{L^2(\mb{R}^n)}&\lesssim(1+t)^{-\frac{s}{2}-\frac{n}{4}},\\
	\left\|\chi_{\intt}(\xi)|\xi|^{s-1}|\sin(\tilde{\gamma}|\xi|t)|\mathrm{e}^{-\frac{\gamma^2+1}{2\gamma^2}|\xi|^2t}\right\|_{L^2(\mb{R}^n)}&\lesssim
	\ml{G}(t;s,n),
	\end{align*}
	where the time-dependent function is defined by
	\begin{align*}
	\ml{G}(t;s,n):=\begin{cases}
	(1+t)^{1-s-\frac{n}{2}}&\mbox{if}\ \ 2s+n<2,\\
	(\ln(\mathrm{e}+t))^{\frac{1}{2}}&\mbox{if}\ \ 2s+n=2,\\
	(1+t)^{1-\frac{5s}{6}-\frac{5n}{12}}&\mbox{if}\  \ 2<2s+n<3,\\
	(1+t)^{\frac{1}{2}-\frac{s}{2}-\frac{n}{4}}&\mbox{if} \ \ 2s+n\geqslant 3.
	\end{cases}
	\end{align*}
\end{lemma}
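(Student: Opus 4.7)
The plan is to pass to spherical coordinates $\xi=r\omega$ in each $L^2$ integral, reducing everything to radial integrals of the form $\int_0^\varepsilon r^\beta\, h(\tilde{\gamma}rt)^2\, e^{-cr^2 t}\,dr$ for appropriate exponents $\beta$ and oscillatory factors $h\in\{\cos,\sin\}$, and then estimate these by exploiting the interplay between the Gaussian decay $e^{-cr^2 t}$ and either the trivial bound $|h|\leq 1$ or the small-argument bound $|\sin(\tilde{\gamma}rt)|\leq\tilde{\gamma}rt$.

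For the cosine estimate, I would simply use $|\cos(\tilde{\gamma}|\xi|t)|^2\leq 1$, so that the radial integral reduces to $\int_0^\varepsilon r^{2s+n-1}e^{-cr^2 t}\,dr$. The substitution $u=cr^2 t$ turns this into a lower-incomplete gamma integral bounded uniformly in $t$ by a constant times $t^{-(s+n/2)}$, so the squared norm decays like $(1+t)^{-s-n/2}$ and the norm itself like $(1+t)^{-s/2-n/4}$.

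The sine estimate is more delicate because $|\xi|^{s-1}$ is non-integrable near the origin once $2s+n\leq 2$. The key observation is $|\sin(\tilde{\gamma}|\xi|t)|\leq\min\{1,\tilde{\gamma}|\xi|t\}$ with crossover at $|\xi|\sim 1/t$, which suggests splitting the radial integral at $r=1/t$. On the inner piece $0<r<1/t$ the Gaussian is essentially $1$ and the bound $|\sin|\leq\tilde{\gamma}rt$ gives a contribution of order $t^2\int_0^{1/t}r^{2s+n-1}\,dr\sim t^{2-2s-n}$, while on the outer piece $1/t<r<\varepsilon$ the bound $|\sin|\leq 1$ reduces matters to $\int_{1/t}^\varepsilon r^{2s+n-3}e^{-cr^2 t}\,dr$, evaluated via $u=cr^2 t$ as an incomplete gamma whose dominant behavior depends on the sign of $(2s+n-2)/2$.

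Combining the two contributions and taking square roots directly reproduces three of the four cases of $\ml{G}(t;s,n)$: for $2s+n<2$ both pieces are of order $t^{2-2s-n}$, giving $(1+t)^{1-s-n/2}$; at the borderline $2s+n=2$ the outer integral develops a logarithmic divergence at its lower cutoff $u\sim 1/t$, producing $(\ln(e+t))^{1/2}$; and for $2s+n\geq 3$ the outer piece dominates with $(1+t)^{1/2-s/2-n/4}$. The intermediate window $2<2s+n<3$ is handled most directly by replacing the splitting with the single interpolated bound $|\sin(\tilde{\gamma}|\xi|t)|^2\leq(\tilde{\gamma}|\xi|t)^{2\alpha}$ applied globally with the sharp choice $\alpha=1-(2s+n)/3$: the radial integrand becomes $t^{2\alpha}r^{2s+2\alpha+n-3}e^{-cr^2 t}$, the $u=cr^2 t$ substitution yields $t^{2-5(2s+n)/6}$ for the squared norm, and the square root produces exactly $(1+t)^{1-5s/6-5n/12}$. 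The main obstacle is keeping the case analysis organized and, in the intermediate regime, identifying the interpolation exponent $\alpha=1-(2s+n)/3$ that produces the stated rate.
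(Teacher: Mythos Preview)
Your argument is correct and self-contained. Note, however, that the paper does not actually prove this lemma: it states explicitly that ``the proof can be found in Theorem 2.2 of \cite{C.-Ike-2020}.'' The only internal hint is the remark following the lemma, which identifies the relevant radial integral $\int_0^{\varepsilon}y^{2s+n-3}|\sin(yt)|^2\mathrm{e}^{-y^2t}\,\mathrm{d}y$ and its singularity at $y=0$ when $2s+n<3$ as the source of the case distinction---precisely the integral your splitting at $r=1/t$ handles. So there is no in-paper proof to compare against; your approach is the natural direct one.

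One small observation on the intermediate window $2<2s+n<3$: your interpolation exponent $\alpha=1-(2s+n)/3$ is engineered to reproduce the rate $(1+t)^{1-5s/6-5n/12}$ exactly as stated, but it is not in fact ``sharp.'' Taking $\alpha=0$ (i.e.\ simply $|\sin|\leqslant1$) is admissible because $2s+n-3>-1$ in this range, and it yields the stronger rate $(1+t)^{1/2-s/2-n/4}$, the same as for $2s+n\geqslant3$. Since the lemma only asserts an upper bound, your argument proves what is claimed; just be aware that the stated $\ml{G}(t;s,n)$ in this window is not optimal.
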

\begin{remark}
Indeed, the first three cases in the function $\ml{G}(t;s,n)$ consider the lower dimensions only. To be specific, the first case includes $n=1$ with $s\in(0,1/2)$; the second case includes $n=1$ with $s=1/2$ and $n=2$ with $s=0$; the third case includes $n=1$ with $s\in(1/2,1)$ and $n=2$ with $s\in(0,1/2)$. The main reason for the consideration of these cases is the singularities for the integral
\begin{align*}
\int_0^{\varepsilon}y^{2s+n-3}|\sin(yt)|^2\mathrm{e}^{-y^2t}\mathrm{d}y
\end{align*}
for $2s+n-3<0$ as $y\downarrow0$.
\end{remark}

\begin{theorem}\label{Thm_Estimates}
	 Let us assume
	\begin{align*}
	(u_0,u_1)\in\ml{D}_s(\mb{R}^n):=(H^s(\mb{R}^n)\cap L^{1,1}(\mb{R}^n))\times (H^{\max\{s-2,0\}}(\mb{R}^n)\cap L^{1,1}(\mb{R}^n)),
	\end{align*}
	with $s\geqslant0$. Then, the solution $u=u(t,x)$ to the Cauchy problem \eqref{VDW_Equation} with the relaxation function \eqref{Exponential_Decay_Relax} fulfills the following estimates:
	\begin{align*}
	\|\,|D|^su(t,\cdot)\|_{L^2(\mb{R}^n)}&\lesssim(1+t)^{-\frac{s+1}{2}-\frac{n}{4}}\|u_0\|_{H^s(\mb{R}^n)\cap L^{1,1}(\mb{R}^n)}+(1+t)^{-\frac{s}{2}-\frac{n}{4}}|P_{u_0}|\\
	&\quad+(1+t)^{-\frac{s}{2}-\frac{n}{4}}\|u_1\|_{H^{\max\{s-2,0\}}(\mb{R}^n)\cap L^{1,1}(\mb{R}^n)}+\ml{G}(t;s,n)|P_{u_1}|.
	\end{align*}
\end{theorem}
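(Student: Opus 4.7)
The plan is to transform to the Fourier side via Plancherel's identity, decompose the frequency space into the three zones $\ml{Z}_{\intt}(\varepsilon)$, $\ml{Z}_{\bdd}(\varepsilon,N)$, $\ml{Z}_{\extt}(N)$, and apply the pointwise kernel bounds of Proposition \ref{Prop_Pointwise_Est} together with the $L^2$-norm estimates of Lemma \ref{Prop_Kernel_Est}. The decisive mechanism for extracting the $P_{u_j}$ contributions is the elementary decomposition
\begin{equation*}
\hat{u}_j(\xi)=P_{u_j}+\bigl(\hat{u}_j(\xi)-P_{u_j}\bigr),\qquad \bigl|\hat{u}_j(\xi)-P_{u_j}\bigr|\leqslant |\xi|\,\|u_j\|_{L^{1,1}(\mb{R}^n)},
\end{equation*}
combined with the trivial bound $|\hat{u}_j(\xi)|\leqslant \|u_j\|_{L^{1,1}(\mb{R}^n)}$ coming from \eqref{Pf_L11}.

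On the small-frequency zone I would apply this splitting to the representation $\hat u=\widehat K_0\hat u_0+\widehat K_1\hat u_1$ and insert the bound \eqref{Point_wise_small} multiplied by $|\xi|^s$. The cosine-type piece in $\widehat K_0$ paired with $P_{u_0}$ produces $|\xi|^s|\cos(\tilde\gamma|\xi|t)|\mathrm{e}^{-c|\xi|^2t}$, whose $L^2$ norm decays like $(1+t)^{-s/2-n/4}$ by the first line of Lemma \ref{Prop_Kernel_Est}; the $P_{u_0}$-subtracted part gains an extra factor $|\xi|$ and hence yields $(1+t)^{-(s+1)/2-n/4}\|u_0\|_{L^{1,1}}$. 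Applied to $\widehat K_1\hat u_1$, the same splitting extracts the function $\ml{G}(t;s,n)|P_{u_1}|$ from the delicate term $|\xi|^{s-1}|\sin(\tilde\gamma|\xi|t)|\mathrm{e}^{-c|\xi|^2t}$ via the second line of Lemma \ref{Prop_Kernel_Est}, while the $P_{u_1}$-subtracted part gains a factor $|\xi|$, cancels the $|\xi|^{-1}$ singularity, and contributes $(1+t)^{-s/2-n/4}\|u_1\|_{L^{1,1}}$. The residual terms $|\xi|^2\mathrm{e}^{-\gamma t}$ appearing in \eqref{Point_wise_small} decay exponentially after $L^2$-integration over $\ml{Z}_{\intt}(\varepsilon)$ and are absorbed.

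For bounded frequencies I would use \eqref{Point_wise_bdd}: since $\chi_{\bdd}|\xi|^s$ has compact support and $|\hat u_j|\leqslant\|u_j\|_{L^{1,1}}$, Plancherel yields the exponentially decaying bound $\mathrm{e}^{-ct}(\|u_0\|_{L^{1,1}}+\|u_1\|_{L^{1,1}})$. For large frequencies, \eqref{Point_wise_large} splits into two pieces: the $\mathrm{e}^{-ct}$ part couples with $\||D|^s u_0\|_{L^2}$ and with $\||D|^{\max\{s-2,0\}}u_1\|_{L^2}$ (the $|\xi|^{-2}$ factor multiplying $\hat u_1$ is precisely what governs the regularity requirement on $u_1$, since $|\xi|^{s-2}\lesssim 1$ on $\ml{Z}_{\extt}(N)$ when $s<2$), while the $|\xi|^{-2}\mathrm{e}^{-|\xi|^2 t}$ part decays at least like $\mathrm{e}^{-cN^2 t}$ after using $|\xi|\geqslant N$. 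Summing the three zones and invoking Plancherel yields the claimed estimate.

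The main technical obstacle is the sine contribution $|\xi|^{s-1}|\sin(\tilde\gamma|\xi|t)|\mathrm{e}^{-c|\xi|^2 t}$ in the low-frequency part of $\widehat K_1$: it is here that the dimension-dependent threshold function $\ml{G}(t;s,n)$ unavoidably enters the final rate, and subtracting $P_{u_1}$ is essential, since the bare pointwise bound with $\hat u_1$ replaced by $\|u_1\|_{L^{1,1}}$ would fail to be $L^2$-integrable near $\xi=0$ in low dimensions. The remainder of the proof is bookkeeping of powers of $|\xi|$ produced by the two-level $L^{1,1}$-splitting, matched against the corresponding norms in Lemma \ref{Prop_Kernel_Est}.
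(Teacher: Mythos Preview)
Your proposal is correct and follows essentially the same route as the paper: Plancherel plus the three-zone split, the pointwise bounds of Proposition~\ref{Prop_Pointwise_Est}, the $L^{1,1}$-decomposition $\hat u_j(\xi)=P_{u_j}+(\hat u_j(\xi)-P_{u_j})$ with $|\hat u_j(\xi)-P_{u_j}|\lesssim|\xi|\|u_j\|_{L^{1,1}}$ (which the paper invokes as Lemma~2.1 of \cite{Ike-2004}), and Lemma~\ref{Prop_Kernel_Est} for the small-frequency integrals. The only cosmetic difference is that in the bounded zone the paper appeals to $L^2$ regularity of the data while you use the $L^{1,1}$ bound $|\hat u_j|\leqslant\|u_j\|_{L^{1,1}}$; both are subsumed in the final norm and the argument is otherwise identical.
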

\begin{remark}\label{Remark_4.2}
	Providing that $|P_{u_0}|=|P_{u_1}|=0$, e.g. $u_0(x)$ and $u_1(x)$ are odd functions with respect to $x_n$, then we can improve the decay estimates in Theorem \ref{Thm_Estimates} by
	\begin{align*}
	\|\,|D|^su(t,\cdot)\|_{L^2(\mb{R}^n)}\lesssim(1+t)^{-\frac{s+1}{2}-\frac{n}{4}}\|u_0\|_{H^s(\mb{R}^n)\cap L^{1,1}(\mb{R}^n)}+(1+t)^{-\frac{s}{2}-\frac{n}{4}}\|u_1\|_{H^{\max\{s-2,0\}}(\mb{R}^n)\cap L^{1,1}(\mb{R}^n)}
	\end{align*}
	for any $s\geqslant0$ and $n\geqslant 1$.
\end{remark}
\begin{remark}
	Actually, the additional memory term $g\ast\Delta u$ with an exponential decay relaxation function \eqref{Exponential_Decay_Relax} does not influence on the dissipative structure of the viscoelastic damped wave equation. Especially, the estimate of the solution itself stated in Theorem \ref{Thm_Estimates} with $s=0$ is the same as those in the recent paper \cite{Ike-2014}.
\end{remark}
\begin{proof}
	To begin with the estimate, we apply Lemma 2.1 from \cite{Ike-2004} and Proposition \ref{Prop_Pointwise_Est} for $\xi\in\ml{Z}_{\intt}(\varepsilon)$ to investigate
	\begin{align*}
	\chi_{\intt}(\xi)|\xi|^s|\hat{u}(t,\xi)|&\lesssim\chi_{\intt}(\xi)\left(|\xi|^{s+1}|\cos(\tilde{\gamma}|\xi|t)+|\xi|^{s+2}|\sin(\tilde{\gamma}|\xi|t)|\right)\mathrm{e}^{-\frac{\gamma^2+1}{2\gamma^2}|\xi|^2t}\|u_0\|_{L^{1,1}(\mb{R}^n)}\\
	&\quad+\chi_{\intt}(\xi)\left(|\xi|^{s}|\cos(\tilde{\gamma}|\xi|t)+|\xi|^{s+1}|\sin(\tilde{\gamma}|\xi|t)|\right)\mathrm{e}^{-\frac{\gamma^2+1}{2\gamma^2}|\xi|^2t}|P_{u_0}|\\
	&\quad+\chi_{\intt}(\xi)\left(|\xi|^{s+3}|\cos(\tilde{\gamma}|\xi|t)|+|\xi|^s|\sin(\tilde{\gamma}|\xi|t)|\right)\mathrm{e}^{-\frac{\gamma^2+1}{2\gamma^2}|\xi|^2t}\|u_1\|_{L^{1,1}(\mb{R}^n)}\\
	&\quad+\chi_{\intt}(\xi)\left(|\xi|^{s_2}|\cos(\tilde{\gamma}|\xi|t)|+|\xi|^{s-1}|\sin(\tilde{\gamma}|\xi|t)|\right)\mathrm{e}^{-\frac{\gamma^2+1}{2\gamma^2}|\xi|^2t}|P_{u_1}|\\
	&\quad+\mathrm{e}^{-\gamma t}\left(\|u_0\|_{L^{1,1}(\mb{R}^n)}+|P_{u_0}|+\|u_1\|_{L^{1,1}(\mb{R}^n)}+|P_{u_1}|\right).
	\end{align*}
	Then, taking account of Lemma \ref{Prop_Kernel_Est} and H\"older's inequality, we obtain
	\begin{align}\label{Ineq_01}
	\|\chi_{\intt}(D)|D|^su(t,\cdot)\|_{L^2(\mb{R}^n)}&\lesssim(1+t)^{-\frac{s+1}{2}-\frac{n}{4}}\|u_0\|_{L^{1,1}(\mb{R}^n)}+(1+t)^{-\frac{s}{2}-\frac{n}{4}}|P_{u_0}|\notag\\
	&\quad+(1+t)^{-\frac{s}{2}-\frac{n}{4}}\|u_1\|_{L^{1,1}(\mb{R}^n)}+\ml{G}(t;s,n)|P_{u_1}|\notag\\
	&\lesssim (1+t)^{-\frac{s}{2}-\frac{n}{4}}\|u_0\|_{L^{1,1}(\mb{R}^n)}+\ml{G}(t;s,n)\|u_1\|_{L^{1,1}(\mb{R}^n)}.
	\end{align}
	Next, considering the estimates for large frequencies, one has
	\begin{align*}
	\chi_{\extt}(\xi)|\xi|^s|\hat{u}(t,\xi)|\lesssim\chi_{\extt}(\xi)\mathrm{e}^{-ct}\left(|\xi|^s|\hat{u}_0(\xi)|+|\xi|^{s-2}|\hat{u}_1(\xi)|\right),
	\end{align*}
	which implies immediately
	\begin{align*}
	\|\chi_{\extt}(D)|D|^su(t,\cdot)\|_{L^2(\mb{R}^n)}\lesssim \mathrm{e}^{-ct}\|u_0\|_{H^s(\mb{R}^n)}+\mathrm{e}^{-ct}\|u_1\|_{H^{s-2}(\mb{R}^n)}.
	\end{align*}
	Particularly, it holds $H^{s-2}(\mb{R}^n)\subseteq H^{\max\{s-2,0\}}(\mb{R}^n)$ for any $s\geqslant0$. Finally, from the derived estimate \eqref{Point_wise_bdd}, an exponential decay estimate can be found by assuming $L^2$ regularity for initial data. Thus, the proof is complete.
\end{proof}

By processing the similar computation to the Theorem \ref{Thm_Estimates}, we can prove the next result.
\begin{theorem}\label{Thm_Estimates_Deriv}
	Let us assume
	\begin{align*}
	(u_0,u_1)\in\widetilde{\ml{D}}_s(\mb{R}^n):=(H^s(\mb{R}^n)\cap L^{1,1}(\mb{R}^n))\times (H^s(\mb{R}^n)\cap L^{1,1}(\mb{R}^n)),
	\end{align*}
	with $s\geqslant0$. Then, the derivative of the solution $u=u(t,x)$ to the Cauchy problem \eqref{VDW_Equation} with the relaxation function \eqref{Exponential_Decay_Relax} fulfills the following estimates:
	\begin{align*}
	\|\,|D|^su_t(t,\cdot)\|_{L^2(\mb{R}^n)}&\lesssim(1+t)^{-\frac{s+2}{2}-\frac{n}{4}}\|u_0\|_{H^s(\mb{R}^n)\cap L^{1,1}(\mb{R}^n)}+(1+t)^{-\frac{s+1}{2}-\frac{n}{4}}|P_{u_0}|\\
	&\quad+(1+t)^{-\frac{s+1}{2}-\frac{n}{4}}\|u_1\|_{H^s(\mb{R}^n)\cap L^{1,1}(\mb{R}^n)}+(1+t)^{-\frac{s}{2}-\frac{n}{4}}|P_{u_1}|.
	\end{align*}
\end{theorem}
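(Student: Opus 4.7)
The plan is to follow exactly the three-zone strategy used in the proof of Theorem \ref{Thm_Estimates}, but now applied to $|D|^s u_t$ via the pointwise Fourier estimates for $\hat{u}_t$ supplied by Proposition \ref{Prop_Pointwise_Est_2}. Concretely, I split $\||D|^s u_t(t,\cdot)\|_{L^2(\mb{R}^n)}^2$ by Plancherel into contributions coming from $\ml{Z}_{\intt}(\varepsilon)$, $\ml{Z}_{\bdd}(\varepsilon,N)$, and $\ml{Z}_{\extt}(N)$, and bound each separately.

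For the small-frequency zone I first use the decomposition $\hat{u}_j(\xi) = P_{u_j} + (\hat{u}_j(\xi)-P_{u_j})$ together with Lemma 2.1 of \cite{Ike-2004}, giving $|\hat{u}_j(\xi)-P_{u_j}| \lesssim |\xi|\,\|u_j\|_{L^{1,1}(\mb{R}^n)}$. Inserting this into the bound from Proposition \ref{Prop_Pointwise_Est_2} and multiplying by $|\xi|^s$, I obtain kernels of the form $|\xi|^a |\cos(\tilde{\gamma}|\xi|t)|\,\mathrm{e}^{-c|\xi|^2 t}$ and $|\xi|^b |\sin(\tilde{\gamma}|\xi|t)|\,\mathrm{e}^{-c|\xi|^2 t}$ to be estimated in $L^2$. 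The cosine kernels are handled by the first inequality of Lemma \ref{Prop_Kernel_Est}. The key observation is that, compared to the kernels of $\hat{u}$ used in Theorem \ref{Thm_Estimates}, every coefficient of $\hat{u}_t$ in Proposition \ref{Prop_Pointwise_Est_2} carries one extra power of $|\xi|$; consequently all the sine exponents $b$ are large enough that the $\xi\to 0$ singularity which forced $\ml{G}(t;s,n)$ into Theorem \ref{Thm_Estimates} no longer occurs. Either the trivial estimate $|\sin|\le 1$ combined with the Gaussian factor, or the $2s'+n\ge 3$ case of Lemma \ref{Prop_Kernel_Est}, then produces a clean polynomial rate.

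Matching the powers, the slowest contribution accompanying each datum turns out to be $(1+t)^{-(s+2)/2-n/4}\|u_0\|_{L^{1,1}(\mb{R}^n)}$, $(1+t)^{-(s+1)/2-n/4}|P_{u_0}|$, $(1+t)^{-(s+1)/2-n/4}\|u_1\|_{L^{1,1}(\mb{R}^n)}$, and $(1+t)^{-s/2-n/4}|P_{u_1}|$, exactly the four rates claimed. The residual piece $|\xi|^2\mathrm{e}^{-\gamma t}$ of Proposition \ref{Prop_Pointwise_Est_2} contributes only exponentially decaying terms and is harmless. On $\ml{Z}_{\bdd}(\varepsilon,N)$, the third bound of Proposition \ref{Prop_Pointwise_Est_2} gives exponential decay under $L^2$ data. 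On $\ml{Z}_{\extt}(N)$, the decay is either pure exponential in $t$ or carries the Gaussian kernel $\mathrm{e}^{-|\xi|^2 t}$, and since $|\xi|^{s-2}\lesssim |\xi|^s$ on this zone the $H^s$ regularity of $u_1$ required in $\widetilde{\ml{D}}_s(\mb{R}^n)$ is precisely what is needed, which explains why the data space for $u_1$ is enlarged from $H^{\max\{s-2,0\}}$ in Theorem \ref{Thm_Estimates} to $H^s$ here.

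The only delicate point is the careful bookkeeping of $|\xi|$-powers in the small-frequency zone to confirm that the slowest rate attached to each of the four terms on the right-hand side is indeed the one claimed, and to verify that no sine kernel falls into the singular regimes of $\ml{G}(t;s,n)$; I expect this exponent matching to be the main, purely computational, obstacle.
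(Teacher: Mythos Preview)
Your proposal is correct and follows essentially the same approach as the paper, which simply states that the result is obtained by processing the similar computation to Theorem \ref{Thm_Estimates}; your outline is in fact more detailed than the paper's own one-line justification. One small inaccuracy worth noting: the heuristic that ``every coefficient of $\hat{u}_t$ carries one extra power of $|\xi|$'' is not literally true (for instance, the cosine coefficient of $\hat{u}_1$ drops from $|\xi|^2$ in Proposition \ref{Prop_Pointwise_Est} to $1$ in Proposition \ref{Prop_Pointwise_Est_2}), but this does not affect your argument, since the only term that previously forced the appearance of $\ml{G}(t;s,n)$ was the $|\xi|^{-1}|\sin(\tilde{\gamma}|\xi|t)|$ factor in front of $\hat{u}_1$, and that is now replaced by $|\xi|^3|\sin(\tilde{\gamma}|\xi|t)|$, so the bookkeeping you describe indeed yields the claimed rates.
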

\begin{remark}
Indeed, the existence result can be shown by the next way: if we assume $(u_0,u_1)\in H^{s+2}(\mb{R}^n)\times H^s(\mb{R}^n)$ for any $s\geqslant0$, then there exists a unique determine Sobolev solution to the Cauchy problem \eqref{VDW_Equation} such that
\begin{align*}
u\in\ml{C}([0,T],H^{s+2}(\mb{R}^n))\cap \ml{C}^1([0,T],H^s(\mb{R}^n))
\end{align*}
for any $T>0$. To prove this statement, we divide it into two steps. For one thing, by using Proposition \ref{Prop_Pointwise_Est} and \ref{Prop_Pointwise_Est_2}, it holds that
\begin{align*}
\hat{u}\in L^{\infty}([0,T],L^{2,s+2}(\mb{R}^n))\ \ \mbox{and}\ \ \hat{u}_t\in L^{\infty}([0,T],L^{2,s}(\mb{R}^n)),
\end{align*}
in which $\hat{f}\in L^{2,s}(\mb{R}^n)$ means $|\xi|^s\hat{f}\in L^2(\mb{R}^n)$. For another, to show the continuity with respect to the time variable, we need to verify
\begin{align*}
\lim\limits_{t_1\to t_2}\left\|\hat{u}(t_1,\cdot)-\hat{u}(t_2,\cdot)\right\|_{L^{2,s+2}(\mb{R}^n)}=0\ \ \mbox{and}\ \ \lim\limits_{t_1\to t_2}\left\|\hat{u}_t(t_1,\cdot)-\hat{u}_t(t_2,\cdot)\right\|_{L^{2,s}(\mb{R}^n)}=0
\end{align*}
for $t_1,t_2\in[0,T]$. At this time, we may complete the proof by using mean value theorem. For example, from \eqref{Represent_Solution} combined with mean value theorem for $t_0\in(t_1,t_2)$, we get
\begin{align*}
\left|\widehat{K}_0(t_1,|\xi|)-\widehat{K}_0(t_2,|\xi|)\right|&\leqslant|t_1-t_2| \sum\limits_{j=1,2,3}\left|\frac{\prod\nolimits_{k=1,2,3,\ k\neq j}\lambda_k(|\xi|)-|\xi|^2}{\prod\nolimits_{k=1,2,3,\ k\neq j}(\lambda_j(|\xi|)-\lambda_k(|\xi|))}\right||\lambda_j(|\xi|)|\mathrm{e}^{\lambda_j(|\xi|)t_0}.
\end{align*}
To estimate the right-hand sides in the $L^2$ norm, the procedure is similar to those for estimating $\hat{u}_t(t,\cdot)$ in the $L^2$ norm without any technical difficulties. Then, our statement can be verified.
\end{remark}

\subsection{Estimates of lower bounds for solutions}\label{Sub_Sec_LOWER}
Let us find the second-order expansion of the solution in the first place, which will contribute to lower bound estimates of the solution. Indeed, the crucial part is the construction of leading terms. We now define the leading terms for the kernels $\widehat{K}_{0,1/2}(t,|\xi|)$ by ignoring the higher-order terms including $\ml{O}(|\xi|^3)$ such that
\begin{align}\label{J_0,1/2}
\widehat{J}_{0,1/2}(t,|\xi|):=\frac{\pm i\sqrt{\gamma(\gamma-1)}+\frac{(\gamma-1)^2}{2\gamma}|\xi|}{\pm 2i\sqrt{\gamma(\gamma-1)}-\frac{2(\gamma-1)}{\gamma}|\xi|}\mathrm{e}^{\pm i\sqrt{\frac{\gamma-1}{\gamma}}|\xi|t-\frac{\gamma^2+1}{2\gamma^2}|\xi|^2t},
\end{align}
and analogously for the kernels $\widehat{K}_{1,1/2}(t,|\xi|)$ such that
\begin{align}\label{J_1,1/2}
\widehat{J}_{1,1/2}(t,|\xi|)&:=\frac{\gamma\pm i\sqrt{\frac{\gamma-1}{\gamma}}|\xi|-\frac{\gamma^2+1}{2\gamma^2}|\xi|^2}{\pm 2i\sqrt{\gamma(\gamma-1)}|\xi|-\frac{2(\gamma-1)}{\gamma}|\xi|^2}\mathrm{e}^{\pm i\sqrt{\frac{\gamma-1}{\gamma}}|\xi|t-\frac{\gamma^2+1}{2\gamma^2}|\xi|^2t}.
\end{align}
Moreover, we denote the sum of the previous leading terms by
\begin{align*}
J_j(t,|x|):=\ml{F}^{-1}_{\xi\to x}\left(\widehat{J}_j(t,|\xi|)\right):=\ml{F}^{-1}_{\xi\to x}\left(\widehat{J}_{j,1}(t,|\xi|)+\widehat{J}_{j,2}(t,|\xi|)\right)
\end{align*}
for $j=0,1$.

Speaking about the consideration of leading terms, we omitted the examination of the kernels $\widehat{K}_{0,3}(t,|\xi|)$ as well as $\widehat{K}_{1,3}(t,|\xi|)$ because they just exert a small perturbation in the sense of exponential decay type. In other words, they will not influence on the second-order expansion.

Additionally, to show $J_0(t,|x|)$ and $J_1(t,|x|)$ really being the leading terms, we have to derive some error estimates which somehow will provide some gained decay rates. These gained decay rates are strongly related to the generalized diffusion phenomenon (see, for example, \cite{Nishihara2003,Ike-Tod-Yor-2013}).

\begin{theorem}\label{Thm_DIFF}
Let us assume $(u_0,u_1)\in L^{1,1}(\mb{R}^n)\times L^{1,1}(\mb{R}^n)$. Then, the solution $u=u(t,x)$ to the Cauchy problem \eqref{VDW_Equation} with the relaxation function \eqref{Exponential_Decay_Relax} fulfills the following refined estimates:
\begin{align*}
\left\|\chi_{\intt}(D)\left(u(t,\cdot)-\left(J_0(t,|\cdot|)P_{u_0}+J_1(t,|\cdot|)P_{u_1}\right)\right)\right\|_{L^2(\mb{R}^n)}\lesssim t^{-\frac{1}{2}-\frac{n}{4}}\|u_0\|_{L^{1,1}(\mb{R}^n)}+ t^{-\frac{n}{4}}\|u_1\|_{L^{1,1}(\mb{R}^n)}
\end{align*}
for $t\gg1$.
\end{theorem}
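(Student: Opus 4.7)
My plan is to work on the Fourier side and split the error into two kinds of remainders: one generated by replacing $\hat{u}_j(\xi)$ by its zero-frequency value $P_{u_j}$, the other by replacing the full kernel $\widehat{K}_j(t,|\xi|)$ by its leading part $\widehat{J}_j(t,|\xi|)$. Using the representation \eqref{Represent_Solution}, for $j=0,1$ I would write
\begin{align*}
\widehat{K}_j(t,|\xi|)\hat{u}_j(\xi)-\widehat{J}_j(t,|\xi|)P_{u_j}=\widehat{K}_j(t,|\xi|)\bigl(\hat{u}_j(\xi)-P_{u_j}\bigr)+\bigl(\widehat{K}_j(t,|\xi|)-\widehat{J}_j(t,|\xi|)\bigr)P_{u_j},
\end{align*}
and estimate the two contributions separately on $\ml{Z}_{\intt}(\varepsilon)$; the bounded and external zones are again treated by \eqref{Point_wise_bdd} and \eqref{Point_wise_large}, which yield exponentially small remainders under the $L^{1,1}$ assumption.

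For the first contribution I would invoke the standard estimate $|\hat{u}_j(\xi)-P_{u_j}|\lesssim|\xi|\,\|u_j\|_{L^{1,1}(\mb{R}^n)}$ (Lemma 2.1 of \cite{Ike-2004}, already used in the proof of Theorem \ref{Thm_Estimates}) together with the pointwise envelopes of Proposition \ref{Prop_Pointwise_Est}. For $j=0$ the extra factor $|\xi|$ turns the cosine-type envelope into a $|\xi|$-weighted Gaussian, whose $L^2$ norm on $\ml{Z}_{\intt}(\varepsilon)$ is controlled by Lemma \ref{Prop_Kernel_Est} with $s=1$ and yields $t^{-1/2-n/4}\|u_0\|_{L^{1,1}(\mb{R}^n)}$. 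For $j=1$ the same factor $|\xi|$ absorbs the $1/|\xi|$ carried by the sine-type envelope in Proposition \ref{Prop_Pointwise_Est}, leaving a $\sin$-type term whose $L^2$ norm is of order $t^{-n/4}\|u_1\|_{L^{1,1}(\mb{R}^n)}$ by the same lemma.

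For the second contribution I would further decompose $\widehat{K}_j-\widehat{J}_j=(\widehat{K}_{j,1}-\widehat{J}_{j,1})+(\widehat{K}_{j,2}-\widehat{J}_{j,2})+\widehat{K}_{j,3}$. The tail $\widehat{K}_{j,3}$ carries the factor $\mathrm{e}^{-\gamma t}$ coming from $\lambda_3$, so its contribution decays faster than any polynomial. For the oscillatory summands I would apply the telescoping identity
\begin{align*}
A_K\,\mathrm{e}^{\Lambda_\pm(|\xi|)t}-A_J\,\mathrm{e}^{M_\pm(|\xi|)t}=(A_K-A_J)\,\mathrm{e}^{\Lambda_\pm(|\xi|)t}+A_J\bigl(\mathrm{e}^{\Lambda_\pm(|\xi|)t}-\mathrm{e}^{M_\pm(|\xi|)t}\bigr),
\end{align*}
where $A_K,A_J$ denote the rational prefactors of $\widehat{K}_{j,1/2},\widehat{J}_{j,1/2}$ and the exponents satisfy $\Lambda_\pm-M_\pm=\ml{O}(|\xi|^3)$. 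Since $\mathrm{Re}\,\Lambda_\pm,\mathrm{Re}\,M_\pm\lesssim-|\xi|^2$ on $\ml{Z}_{\intt}(\varepsilon)$, the elementary inequality $|\mathrm{e}^{z_1}-\mathrm{e}^{z_2}|\leqslant|z_1-z_2|\max(|\mathrm{e}^{z_1}|,|\mathrm{e}^{z_2}|)$ gives $|\mathrm{e}^{\Lambda_\pm t}-\mathrm{e}^{M_\pm t}|\lesssim|\xi|^3 t\,\mathrm{e}^{-c|\xi|^2 t}$. A short algebraic check on the explicit quotients in \eqref{J_0,1/2}--\eqref{J_1,1/2} shows $|A_K-A_J|=\ml{O}(|\xi|^2)$ in the $j=0$ case, whereas in the $j=1$ case one first has to factor $|\xi|$ out of numerator and denominator before subtracting, and the resulting quotient difference is only $\ml{O}(|\xi|)$.

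Collecting scales, the $u_0$-contribution from the second piece is dominated by the weights $|\xi|^2\mathrm{e}^{-c|\xi|^2t}$ and $|\xi|^3 t\,\mathrm{e}^{-c|\xi|^2 t}$, both of whose $L^2$ norms are $\lesssim t^{-1/2-n/4}$ by Lemma \ref{Prop_Kernel_Est}, matching the first-piece bound; the $u_1$-contribution is dominated by $|\xi|\mathrm{e}^{-c|\xi|^2t}$ and $|\xi|^2 t\,\mathrm{e}^{-c|\xi|^2 t}$, whose $L^2$ norms are $\lesssim t^{-n/4}$ by the same lemma. The main obstacle I anticipate is the careful bookkeeping of the $1/|\xi|$ singularity in $\widehat{J}_{1,1/2}$: one has to factor $|\xi|$ out of numerator and denominator before comparing prefactors, since otherwise the quotient differences would fail to be square-integrable against the Gaussian envelope in low space dimension and the $j=1$ rate $t^{-n/4}$ would be lost.
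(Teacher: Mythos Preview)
Your proposal is correct and follows essentially the same route as the paper: the same splitting into $\widehat{K}_j(\hat{u}_j-P_{u_j})$ and $(\widehat{K}_j-\widehat{J}_j)P_{u_j}$, the same use of $|\hat{u}_j(\xi)-P_{u_j}|\lesssim|\xi|\,\|u_j\|_{L^{1,1}}$, and the same further breakdown of $\widehat{K}_j-\widehat{J}_j$ into the oscillatory parts $\widehat{K}_{j,1/2}-\widehat{J}_{j,1/2}$ plus the exponentially decaying $\widehat{K}_{j,3}$. The only cosmetic differences are that the paper writes the phase remainder via the integral identity $\mathrm{e}^{\ml{O}(|\xi|^3)t}-1=\ml{O}(|\xi|^3)t\int_0^1\mathrm{e}^{\ml{O}(|\xi|^3)ts}\mathrm{d}s$ rather than your mean-value bound, and packages the prefactor comparison into a single algebraic fraction rather than your telescoping line; the resulting orders $\ml{O}(|\xi|^3)t+\ml{O}(|\xi|^2)$ for $j=0$ and $\ml{O}(|\xi|^2)t+\ml{O}(|\xi|)$ for $j=1$ coincide. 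One minor remark: since the statement only concerns $\chi_{\intt}(D)$, there is no need to discuss the bounded and external zones at all.
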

\begin{remark}
Comparing with the estimate \eqref{Ineq_01} with $s=0$, we observe that the decay rates stated in Theorem \ref{Thm_DIFF} have been improved for $t\gg1$ if we subtract the functions $J_0(t,|x|)P_{u_0}$ and $J_1(t,|x|)P_{u_1}$. Precisely, concerning $t\gg1$, the decay rate for $u_0(x)$ is improved by $t^{-\frac{1}{2}}$ and the decay rate for $u_1(x)$ is improved by $t^{-\frac{3}{4}}$ if $n=1$, $t^{-\frac{1}{2}}(\ln t)^{-\frac{1}{2}}$ if $n=2$, $t^{-\frac{1}{2}}$ if $n\geqslant 3$. These improvements indeed indicate that $J_{0}(t,|x|)P_{u_0}+J_1(t,|x|)P_{u_1}$ approximates to the solution of the viscoelastic damped wave equation with memory in the $L^2$ norm.
\end{remark}
\begin{remark}
Concerning the viscoelastic damped wave equation without memory, namely,
\begin{align}\label{VDW_Equation_without_Memory}
\begin{cases}
v_{tt}-\Delta v-\Delta v_t=0,&x\in\mb{R}^n,\ t>0,\\
(v,v_t)(0,x)=(v_0,v_1)(x),&x\in\mb{R}^n,
\end{cases}
\end{align}
the recent paper \cite{Ike-2014} found that the asymptotic profile of $v=v(t,x)$ is related to the diffusion waves. Precisely, the profile of solution to \eqref{VDW_Equation_without_Memory} in the $L^2$ norm can be described by
\begin{align*}
\ml{W}_{\mathrm{diff}}(t,x):=\ml{F}^{-1}_{\xi\to x}\left(\cos(|\xi|t)\mathrm{e}^{-\frac{|\xi|^2}{2}t}\right)P_{v_0}+\ml{F}^{-1}_{\xi\to x}\left(\frac{\sin(|\xi|t)}{|\xi|}\mathrm{e}^{-\frac{|\xi|^2}{2}t}\right)P_{v_1}.
\end{align*} 
However, due to the memory effect generated by $g\ast\Delta u$ with the exponential decay relaxation function in \eqref{VDW_Equation}, the profile of $u=u(t,x)$ has been changed into a more complex form. Precisely, according to \eqref{J_0,1/2} and \eqref{J_1,1/2}, the profile of solution to \eqref{VDW_Equation} in the $L^2$ norm can be described by 
\begin{align*}
&J_0(t,|x|)P_{u_0}+J_1(t,|x|)P_{u_1}\\
&=\ml{F}^{-1}_{\xi\to x}\left(\left(\frac{2\gamma^3-(\gamma-1)^2|\xi|^2}{2\gamma^3+2(\gamma-1)|\xi|^2}\cos(\tilde{\gamma}|\xi|t)+\frac{\gamma(\gamma+1)\sqrt{\gamma(\gamma-1)}|\xi|}{2\gamma^3+2(\gamma-1)|\xi|^2}\sin(\tilde{\gamma}|\xi|t)\right)\mathrm{e}^{-\frac{\gamma^2+1}{2\gamma^2}|\xi|^2t}\right)P_{u_0}\\
&\quad+\ml{F}_{\xi\to x}^{-1}\left(\left(\frac{(\gamma^2+1)|\xi|^2}{2\gamma^4+2\gamma(\gamma-1)|\xi|^2}\cos(\tilde{\gamma}|\xi|t)+\frac{2\gamma^3-(\gamma-3)(\gamma+1)|\xi|^2}{2\gamma^3\tilde{\gamma}+2(\gamma-1)\tilde{\gamma}|\xi|^2}\frac{\sin(\tilde{\gamma}|\xi|t)}{|\xi|}\right)\mathrm{e}^{-\frac{\gamma^2+1}{2\gamma^2}|\xi|^2t}\right)P_{u_1}.
\end{align*}
In the view of the asymptotic behaviors, we may find
\begin{align*}
J_0(t,|x|)P_{u_0}+J_1(t,|x|)P_{u_1}\sim\ml{W}_{\mathrm{diff}}(t,x)-\Delta \ml{W}_{\mathrm{diff}}(t,x).
\end{align*}
It means that the exponential decay memory term provides an additional influence on the profiles of solution, i.e. second-order derivative with respect to spatial variables for the diffusion waves. In fact, the function $-\Delta \ml{W}_{\mathrm{diff}}(t,x)$ also can be understood as the solution to the higher-order diffusion waves. Finally, we conjecture a generalized diffusion phenomenon that for $(u_0,u_1)\in\ml{D}_0(\mb{R}^n)$ with $n\geqslant 1$, it holds that
\begin{align*}
\lim\limits_{t\to\infty}\left((\ml{G}(t;0,n))^{-1}\left\|u(t,\cdot)-\ml{M}_1\vec{\ml{w}}(t,\cdot)-\ml{M}_2\Delta\vec{\ml{w}}(t,\cdot)\right\|_{L^2(\mb{R}^n)}\right)=0,
\end{align*}
where $\ml{M}_1,\ml{M}_2\in\mb{C}^{1\times2}$ and $\vec{\ml{w}}=\vec{\ml{w}}(t,x)$ is the vector solution to
\begin{align*}
\vec{\ml{w}}_t-\frac{\gamma^2+1}{2\gamma^2}\mathrm{diag}(1,1)\Delta\vec{\ml{w}}+i\sqrt{\frac{\gamma-1}{\gamma}}\mathrm{diag}(1,-1)(-\Delta)^{1/2}\vec{\ml{w}}=0,
\end{align*}
with suitable initial data $\vec{\ml{w}}(0,x)=\vec{\ml{w}}_0(x)$.
\end{remark}
\begin{proof}
	According to the representation of the solution \eqref{Represent_Solution}, we may observe
	\begin{align*}
	&\chi_{\intt}(\xi)\left|\hat{u}(t,\xi)-\widehat{J}_0(t,|\xi|)\hat{u}_0(\xi)-\widehat{J}_1(t,|\xi|)\hat{u}_1(\xi) \right|\\
	&\lesssim\chi_{\intt}(\xi)\sum\limits_{j=0,1}\sum\limits_{k=1,2}\left|\widehat{K}_{j,k}(t,|\xi|)-\widehat{J}_{j,k}(t,|\xi|)\right||\hat{u}_j(\xi)|+\chi_{\intt}(\xi)\sum\limits_{j=0,1}|\widehat{K}_{j,3}(t,|\xi|)||\hat{u}_j(\xi)|.
	\end{align*}
	For one thing, by direct calculations associated with the formulas
	\begin{align*}
	\frac{\left(f_1(|\xi|)+\ml{O}(|\xi|^3)\right)\mathrm{e}^{\ml{O}(|\xi|^3)t}}{f_2(|\xi|)+\ml{O}(|\xi|^3)}-\frac{f_1(|\xi|)}{f_2(|\xi|)}=\frac{f_1(|\xi|)f_2(|\xi|)\left(\mathrm{e}^{\ml{O}(|\xi|^3)t}-1\right)+\left(f_2(|\xi|)\mathrm{e}^{\ml{O}(|\xi|^3)t}-f_1(|\xi|)\right)\ml{O}(|\xi|^3)}{(f_2(|\xi|))^2+f_2(|\xi|)\ml{O}(|\xi|^3)},
	\end{align*}
	as well as
	\begin{align*}
	\mathrm{e}^{\ml{O}(|\xi|^3)t}-1=\ml{O}(|\xi|^3)t\int_0^1\mathrm{e}^{\ml{O}(|\xi|^3)ts}\mathrm{d}s,
	\end{align*}
	 it holds
	\begin{align}\label{Eq_03}
	&\chi_{\intt}(\xi)\left|\widehat{K}_{0,1/2}(t,|\xi|)-\widehat{J}_{0,1/2}(t,|\xi|)\right|\notag\\
	&\lesssim\chi_{\intt}(\xi)\mathrm{e}^{-\frac{\gamma^2+1}{2\gamma^2}|\xi|^2t}\left|\frac{\left(\pm i\sqrt{\gamma(\gamma-1)}|\xi|+\frac{(\gamma-1)^2}{2\gamma}|\xi|^2+\ml{O}(|\xi|^3)\right)\mathrm{e}^{\ml{O}(|\xi|^3)t}}{\left(\pm 2i\sqrt{\gamma(\gamma-1)}|\xi|-\frac{2(\gamma-1)}{\gamma}|\xi|^2+\ml{O}(|\xi|^3)\right)}-\frac{\pm i\sqrt{\gamma(\gamma-1)}+\frac{(\gamma-1)^2}{2\gamma}|\xi|}{\pm 2i\sqrt{\gamma(\gamma-1)}-\frac{2(\gamma-1)}{\gamma}|\xi|}\right|\notag\\
	&\lesssim \chi_{\intt}(\xi)\mathrm{e}^{-\frac{\gamma^2+1}{2\gamma^2}|\xi|^2t}\left(\ml{O}(|\xi|^3)t\int_0^1\mathrm{e}^{\ml{O}(|\xi|^3)ts}\mathrm{d}s+\ml{O}(|\xi|^2)\right).
	\end{align}
	Similarly, we obtain
	\begin{align}\label{Eq_04}
	&\chi_{\intt}(\xi)\left|\widehat{K}_{1,1/2}(t,|\xi|)-\widehat{J}_{1,1/2}(t,|\xi|)\right|\notag\\
	&\lesssim\chi_{\intt}(\xi)\mathrm{e}^{-\frac{\gamma^2+1}{2\gamma^2}|\xi|^2t}\left|\frac{\left(\gamma\pm i\sqrt{\frac{\gamma-1}{\gamma}}|\xi|-\frac{\gamma^2+1}{2\gamma^2}|\xi|^2+\ml{O}(|\xi|^3)\right)\mathrm{e}^{\ml{O}(|\xi|^3)t}}{\left(\pm 2i\sqrt{\gamma(\gamma-1)}|\xi|-\frac{2(\gamma-1)}{\gamma}|\xi|^2+\ml{O}(|\xi|^3)\right)}-\frac{\gamma\pm i\sqrt{\frac{\gamma-1}{\gamma}}|\xi|-\frac{\gamma^2+1}{2\gamma^2}|\xi|^2}{\pm 2i\sqrt{\gamma(\gamma-1)}|\xi|-\frac{2(\gamma-1)}{\gamma}|\xi|^2}\right|\notag\\
	&\lesssim \chi_{\intt}(\xi)\mathrm{e}^{-\frac{\gamma^2+1}{2\gamma^2}|\xi|^2t}\left(\ml{O}(|\xi|^2)t\int_0^1\mathrm{e}^{\ml{O}(|\xi|^3)ts}\mathrm{d}s+\ml{O}(|\xi|)\right).
	\end{align}
	In other words, we arrive at
	\begin{align*}
	&\chi_{\intt}(\xi)\left|\hat{u}(t,\xi)-\widehat{J}_0(t,|\xi|)\hat{u}_0(\xi)-\widehat{J}_1(t,|\xi|)\hat{u}_1(\xi) \right|\\
	&\lesssim \chi_{\intt}(\xi)\mathrm{e}^{-c|\xi|^2t}\left(\ml{O}(|\xi|^2)t+\ml{O}(|\xi|)\right)\left(|\xi||\hat{u}_0(\xi)|+|\hat{u}_1(\xi)|\right)+\chi_{\intt}(\xi)\mathrm{e}^{-ct}(|\hat{u}_0(\xi)|+|\hat{u}_1(\xi)|).
	\end{align*}
	Next, initial data can be decomposed by
	\begin{align*}
	\hat{u}_j(\xi)=P_{u_j}+A_j(\xi)-iB_j(\xi)
	\end{align*}
	for $j=0,1$, where
	\begin{align*}
	A_j(\xi):=\int_{\mb{R}^n}u_j(x)(1-\cos(x\cdot\xi))\mathrm{d}x\ \ \mbox{and}\ \ B_j(\xi):=\int_{\mb{R}^n}u_j(x)\sin(x\cdot\xi)\mathrm{d}x.
	\end{align*}
	According to Lemma 2.2 in \cite{Ike-2014}, one may have
	\begin{align}\label{Est_A,B}
	|A_j(\xi)|+|B_j(\xi)|\lesssim |\xi|\,\|u_j\|_{L^{1,1}(\mb{R}^n)}
	\end{align}
	for $j=0,1$. It leads to the representation \eqref{Represent_Solution} which can be rewritten by
	\begin{align}\label{Decom}
	\hat{u}(t,\xi)=\sum\limits_{j=0,1}\left(\widehat{K}_j(t,|\xi|)P_{u_j}+\widehat{K}_j(t,|\xi|)(A_j(\xi)-iB_j(\xi))\right).
	\end{align}
	By employing \eqref{Eq_03}, \eqref{Eq_04}, \eqref{Est_A,B} and \eqref{Decom}, the solution can be estimated by
	\begin{align*}
	&\left\|\chi_{\intt}(D)\left(u(t,\cdot)-\left(J_0(t,|\cdot|)P_{u_0}+J_1(t,|\cdot|)P_{u_1}\right)\right)\right\|_{L^2(\mb{R}^n)}\\
	&=\left\|\chi_{\intt}(\xi)\left(\hat{u}(t,\xi)-\left(\widehat{J}_0(t,|\xi|)P_{u_0}+\widehat{J}_1(t,|\xi|)P_{u_1}\right)\right)\right\|_{L^2(\mb{R}^n)}\\
	&\lesssim\left\|\chi_{\intt}(\xi)\left(\widehat{K}_0(t,|\xi|)-\widehat{J}_0(t,|\xi|)\right)\right\|_{L^2(\mb{R}^n)}|P_{u_0}|+\left\|\chi_{\intt}(\xi)\left(\widehat{K}_1(t,|\xi|)-\widehat{J}_1(t,|\xi|)\right)\right\|_{L^2(\mb{R}^n)}|P_{u_1}|\\
	&\quad+\sum\limits_{j=0,1}\left\|\chi_{\intt}(\xi)\widehat{K}_j(t,|\xi|)(A_j(\xi)-iB_j(\xi))\right\|_{L^2(\mb{R}^n)}\\
	&\lesssim \left\|\chi_{\intt}(\xi)(|\xi|^3t+|\xi|^2)\mathrm{e}^{-c|\xi|^2t}\right\|_{L^2(\mb{R}^n)}|P_{u_0}|+\left\|\chi_{\intt}(\xi)(|\xi|^2t+|\xi|)\mathrm{e}^{-c|\xi|^2t}\right\|_{L^2(\mb{R}^n)}|P_{u_1}|\\
	&\quad+\mathrm{e}^{-ct}\left(|P_{u_0}|+|P_{u_1}|\right)+\sum\limits_{j=0,1}\left\|\chi_{\intt}(\xi)|\xi|\widehat{K}_j(t,|\xi|)\right\|_{L^2(\mb{R}^n)}\|u_j\|_{L^{1,1}(\mb{R}^n)}\\
	&\lesssim t^{-\frac{1}{2}-\frac{n}{4}}\|u_0\|_{L^{1,1}(\mb{R}^n)}+t^{-\frac{n}{4}}\|u_1\|_{L^{1,1}(\mb{R}^n)}
	\end{align*}
	for $t\gg1$, where \eqref{Pf_L11} was applied. The proof is complete.
\end{proof}

From Theorem \ref{Thm_Estimates}, in the case $|P_{u_1}|\neq0$, upper bounds of the solution to the Cauchy problem \eqref{VDW_Equation} with the relaxation function \eqref{Exponential_Decay_Relax} can be controlled by
\begin{align*}
\|u(t,\cdot)\|_{L^2(\mb{R}^n)}\lesssim \ml{H}(t;n)\|(u_0,u_1)\|_{\ml{D}_0(\mb{R}^n)},
\end{align*}
for any $t\gg1$, where we used \eqref{Pf_L11} and the time-dependent coefficient is
\begin{align*}
\ml{H}(t;n):=\begin{cases}
t^{\frac{1}{2}}&\mbox{if}\ \ n=1,\\
(\ln t)^{\frac{1}{2}}&\mbox{if} \ \ n=2,\\
t^{\frac{1}{2}-\frac{n}{4}}&\mbox{if} \ \ n\geqslant 3.
\end{cases}
\end{align*}
Here, we should underline that $\ml{G}(t;0,n)\approx \ml{H}(t;n)$ for $t\gg1$. For this reason, the natural question is the optimality of the previous estimate. Namely, sharp lower bound estimates for $\|u(t,\cdot)\|_{L^2(\mb{R}^n)}$ as $t\gg1$. To answer this question, let us recall a useful lemma whose proofs were shown in \cite{Ike-2014,Ike-Ono-2017}.
\begin{lemma}\label{Prop_Kernel_Est_Lower}
	Let $\gamma>1$ and $\tilde{\gamma}>0$. The following estimates hold:
	\begin{align*}
	\left\|\chi_{\intt}(\xi)|\cos(\tilde{\gamma}|\xi|t)|\mathrm{e}^{-\frac{\gamma^2+1}{2\gamma^2}|\xi|^2t}\right\|_{L^2(\mb{R}^n)}&\gtrsim t^{-\frac{n}{4}},\\
	\left\|\chi_{\intt}(\xi)|\xi|^{-1}|\sin(\tilde{\gamma}|\xi|t)|\mathrm{e}^{-\frac{\gamma^2+1}{2\gamma^2}|\xi|^2t}\right\|_{L^2(\mb{R}^n)}&\gtrsim \ml{H}(t;n),
	\end{align*}
for any $t\gg1$.
\end{lemma}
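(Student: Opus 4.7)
Both bounds are radial integrals after passing to spherical coordinates, so my first step is to reduce each norm to a one-dimensional integral. Squaring and using the $O(n)$-invariance of the integrands, I would write
\begin{align*}
\left\|\chi_{\intt}(\xi)|\cos(\tilde{\gamma}|\xi|t)|\mathrm{e}^{-\frac{\gamma^2+1}{2\gamma^2}|\xi|^2t}\right\|_{L^2(\mb{R}^n)}^2 &\gtrsim \int_0^{\varepsilon} r^{n-1}\cos^2(\tilde{\gamma}rt)\,\mathrm{e}^{-\frac{\gamma^2+1}{\gamma^2}r^2 t}\,\mathrm{d}r,\\
\left\|\chi_{\intt}(\xi)|\xi|^{-1}|\sin(\tilde{\gamma}|\xi|t)|\mathrm{e}^{-\frac{\gamma^2+1}{2\gamma^2}|\xi|^2t}\right\|_{L^2(\mb{R}^n)}^2 &\gtrsim \int_0^{\varepsilon} r^{n-3}\sin^2(\tilde{\gamma}rt)\,\mathrm{e}^{-\frac{\gamma^2+1}{\gamma^2}r^2 t}\,\mathrm{d}r,
\end{align*}
and then normalize the Gaussian by the rescaling $r=y/\sqrt{t}$.

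\textbf{Cosine bound.} After rescaling, the right-hand side of the first inequality becomes $t^{-n/2}\int_0^{\varepsilon\sqrt{t}} y^{n-1}\cos^2(\tilde{\gamma}y\sqrt{t})\,\mathrm{e}^{-\frac{\gamma^2+1}{\gamma^2}y^2}\,\mathrm{d}y$. Splitting $\cos^2\theta=\tfrac12+\tfrac12\cos(2\theta)$ decomposes this into a nonnegative ``averaged'' part
\begin{align*}
\frac{1}{2}\int_0^{\varepsilon\sqrt{t}} y^{n-1}\mathrm{e}^{-\frac{\gamma^2+1}{\gamma^2}y^2}\,\mathrm{d}y \longrightarrow \frac{1}{2}\int_0^{\infty} y^{n-1}\mathrm{e}^{-\frac{\gamma^2+1}{\gamma^2}y^2}\,\mathrm{d}y>0,
\end{align*}
and an oscillatory remainder $\int_0^{\varepsilon\sqrt{t}} y^{n-1}\cos(2\tilde{\gamma}y\sqrt{t})\,\mathrm{e}^{-\frac{\gamma^2+1}{\gamma^2}y^2}\,\mathrm{d}y$ which, by the closed form for the Fourier–Laplace transform of $y^{n-1}\mathrm{e}^{-cy^2}$ on $[0,\infty)$ (or by repeated integration by parts), decays like $\mathrm{e}^{-c_0 t}$ times a polynomial in $\sqrt{t}$. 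The averaged part therefore dominates for $t\gg1$, and multiplying by the $t^{-n/2}$ prefactor and taking the square root yields the claim $\gtrsim t^{-n/4}$.

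\textbf{Sine bound.} The same rescaling recasts the second integral as $t^{1-\frac{n}{2}}\,\mathcal{I}_n(t)$ with $\mathcal{I}_n(t):=\int_0^{\varepsilon\sqrt{t}} y^{n-3}\sin^2(\tilde{\gamma}y\sqrt{t})\,\mathrm{e}^{-\frac{\gamma^2+1}{\gamma^2}y^2}\,\mathrm{d}y$, and I would then handle $\mathcal{I}_n(t)$ according to the three dimensional regimes that appear in $\ml{H}(t;n)$. When $n\geqslant3$ the weight $y^{n-3}\mathrm{e}^{-cy^2}$ is integrable on $[0,\infty)$, so writing $\sin^2=\tfrac12-\tfrac12\cos(2\tilde{\gamma}y\sqrt{t})$ and invoking Riemann–Lebesgue on the oscillatory piece gives $\mathcal{I}_n(t)\to\tfrac12\int_0^\infty y^{n-3}\mathrm{e}^{-\frac{\gamma^2+1}{\gamma^2}y^2}\mathrm{d}y>0$, matching $\ml{H}(t;n)=t^{1/2-n/4}$. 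When $n=2$ I would split $\mathcal{I}_2(t)$ at $y\sim 1/\sqrt{t}$: on $[0,1/\sqrt{t}]$ the bound $\sin^2(\tilde{\gamma}y\sqrt{t})\lesssim (\tilde{\gamma}y\sqrt{t})^2$ shows that range contributes $O(1)$, while on $[1/\sqrt{t},\varepsilon]$ time-averaging $\sin^2$ against the weight $y^{-1}$ produces $\int_{1/\sqrt{t}}^{\varepsilon}y^{-1}\mathrm{d}y\sim\tfrac12\ln t$, hence $\mathcal{I}_2(t)\gtrsim \ln t$, i.e.\ $\ml{H}(t;2)^2=\ln t$. Finally for $n=1$ the same splitting gives $\sim \tilde{\gamma}^2 t\cdot 1/\sqrt{t}=\sqrt{t}$ from the inner interval and $\int_{1/\sqrt{t}}^{\varepsilon}y^{-2}\mathrm{d}y\sim\sqrt{t}$ from the outer one, producing $\mathcal{I}_1(t)\gtrsim \sqrt{t}$ and therefore the prefactor $t^{1-1/2}$ combined with $\sqrt{t}$ inside the square root yields $t^{1/2}$, as required.

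\textbf{Main obstacle.} The routine part is the spherical-coordinate reduction and the rescaling; the delicate point is controlling $\mathcal{I}_n(t)$ when the weight $y^{n-3}$ is non-integrable at the origin (i.e.\ for $n=1,2$). One has to exploit the fact that the singularity at $y=0$ is precisely cancelled by $\sin^2(\tilde{\gamma}y\sqrt{t})\sim(\tilde{\gamma}y\sqrt{t})^2$ on the tiny interval $[0,1/\sqrt{t}]$, and choosing that splitting threshold so that both subintervals contribute at the same order is what forces the $\ln t$ in dimension two and the $\sqrt{t}$ in dimension one. Once that balancing is made explicit, the lower bounds follow without further difficulty.
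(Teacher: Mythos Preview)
The paper does not prove this lemma; it merely cites \cite{Ike-2014,Ike-Ono-2017} for the argument. Your sketch is correct and is essentially the standard proof found in those references: reduce to a radial integral, rescale $r=y/\sqrt{t}$, split $\cos^2$ (resp.\ $\sin^2$) into mean plus oscillatory part, and dispose of the oscillatory contribution by Riemann--Lebesgue or one integration by parts. For $n\leqslant 2$ in the sine integral the cut at $y\sim 1/\sqrt{t}$ is exactly the right device to neutralise the non-integrable weight $y^{n-3}$.

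One small point worth tightening: in the $n=2$ outer interval you invoke ``time-averaging $\sin^2$'' somewhat informally. To make it rigorous, substitute $u=\tilde{\gamma}y\sqrt{t}$ to obtain $\int_{\tilde{\gamma}}^{\tilde{\gamma}\varepsilon\sqrt{t}} u^{-1}\sin^2 u\,\mathrm{d}u$, and then use $\sin^2 u=\tfrac12-\tfrac12\cos(2u)$ together with one integration by parts on the oscillatory piece to get $\int_a^b u^{-1}\sin^2 u\,\mathrm{d}u=\tfrac12\ln(b/a)+O(1)$ uniformly in $a,b$; this yields the $\ln t$ cleanly. Similarly, in the $n=1$ outer interval the substitution gives $\tilde{\gamma}\sqrt{t}\int_{\tilde{\gamma}}^{\tilde{\gamma}\varepsilon\sqrt{t}} u^{-2}\sin^2 u\,\mathrm{d}u$, and the convergence of $\int_{\tilde{\gamma}}^{\infty}u^{-2}\sin^2 u\,\mathrm{d}u$ to a positive constant already suffices for the lower bound (so the inner-interval contribution, while of the same order, is not strictly needed). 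With these cosmetic adjustments your argument is complete.
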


\begin{theorem}\label{Thm_Asym}
	Let us assume $(u_0,u_1)\in \ml{D}_0(\mb{R}^n)$ carrying $|P_{u_1}|\neq 0$. Then, the solution $u=u(t,x)$ to the Cauchy problem \eqref{VDW_Equation} with the relaxation function \eqref{Exponential_Decay_Relax} fulfills the following estimates:
	\begin{align*}
	\ml{H}(t;n)|P_{u_1}|\lesssim \|u(t,\cdot)\|_{L^2(\mb{R}^n)}\lesssim \ml{H}(t;n)\|(u_0,u_1)\|_{\ml{D}_0(\mb{R}^n)}
	\end{align*}
	for any $t\gg1$.
\end{theorem}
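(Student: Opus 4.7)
The upper bound is an immediate corollary of Theorem \ref{Thm_Estimates} with $s=0$: invoking \eqref{Pf_L11} to replace $|P_{u_0}|,|P_{u_1}|$ by the $L^{1,1}$-norms, one observes that each of $(1+t)^{-1/2-n/4}$, $(1+t)^{-n/4}$ and $\ml{G}(t;0,n)$ is bounded by a constant multiple of $\ml{H}(t;n)$ for $t\gg 1$, so the four terms combine into $\ml{H}(t;n)\|(u_0,u_1)\|_{\ml{D}_0(\mb{R}^n)}$. The substance of the statement lies in the lower bound, for which the plan is to localise to small frequencies via $\chi_{\intt}(D)$ and to compare the solution with the leading-order profile $J_0(t,|\cdot|)P_{u_0}+J_1(t,|\cdot|)P_{u_1}$ from Theorem \ref{Thm_DIFF}. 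Writing $R(t):=t^{-1/2-n/4}\|u_0\|_{L^{1,1}(\mb{R}^n)}+t^{-n/4}\|u_1\|_{L^{1,1}(\mb{R}^n)}$ and chaining $\|u\|_{L^2(\mb{R}^n)}\geqslant\|\chi_{\intt}(D)u\|_{L^2(\mb{R}^n)}$ with Theorem \ref{Thm_DIFF} and a second triangle inequality yields
\[
\|u(t,\cdot)\|_{L^2(\mb{R}^n)}\geqslant\|\chi_{\intt}(\xi)\widehat{J}_1(t,|\xi|)\|_{L^2(\mb{R}^n)}|P_{u_1}|-\|\chi_{\intt}(\xi)\widehat{J}_0(t,|\xi|)\|_{L^2(\mb{R}^n)}|P_{u_0}|-CR(t).
\]

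The $J_0$ contribution is harmless: from \eqref{J_0,1/2} its leading behaviour near $\xi=0$ is of pure $\cos(\tilde{\gamma}|\xi|t)\mathrm{e}^{-c|\xi|^2 t}$ type with no $|\xi|^{-1}$ singularity, so the first inequality in Lemma \ref{Prop_Kernel_Est} gives $\|\chi_{\intt}(\xi)\widehat{J}_0\|_{L^2(\mb{R}^n)}\lesssim t^{-n/4}$, which is strictly smaller than $\ml{H}(t;n)$ in every dimension. The key step, and the main obstacle, is therefore the matching \emph{lower} bound $\|\chi_{\intt}(\xi)\widehat{J}_1(t,|\xi|)\|_{L^2(\mb{R}^n)}\gtrsim \ml{H}(t;n)$. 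Using the explicit representation of $\widehat{J}_1$ recorded in the remark after Theorem \ref{Thm_DIFF}, I split $\widehat{J}_1=I_s+I_c$, where the sine piece $I_s$ carries the $|\xi|^{-1}$ singularity responsible for the $\ml{H}(t;n)$-growth and $I_c$ is an $O(|\xi|^2)$-weighted cosine. A naive triangle inequality for a sum only produces an upper bound, so instead I apply the elementary pointwise estimate $(a+b)^2\geqslant \tfrac{1}{2}a^2-b^2$ (verified via $\tfrac{1}{2}(a+2b)^2\geqslant 0$), integrate, and invoke Plancherel: the second estimate in Lemma \ref{Prop_Kernel_Est_Lower} then forces $\|\chi_{\intt}(\xi)I_s\|_{L^2(\mb{R}^n)}\gtrsim \ml{H}(t;n)$, while a direct Gaussian rescaling gives $\|\chi_{\intt}(\xi)I_c\|_{L^2(\mb{R}^n)}\lesssim t^{-1-n/4}$, which is of strictly smaller order.

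Combining these two contributions one gets $\|\chi_{\intt}(\xi)\widehat{J}_1\|_{L^2(\mb{R}^n)}^2\gtrsim \tfrac{c}{2}\ml{H}(t;n)^2-Ct^{-2-n/2}\gtrsim \tfrac{c}{4}\ml{H}(t;n)^2$ for $t$ sufficiently large, since $\ml{H}(t;n)^2$ equals $t$, $\ln t$ or $t^{1-n/2}$ depending on $n$, each strictly dominating $t^{-2-n/2}$. Plugging this back into the displayed inequality and using the hypothesis $|P_{u_1}|\neq 0$, the term $c\ml{H}(t;n)|P_{u_1}|$ dominates both $t^{-n/4}|P_{u_0}|$ and the remainder $R(t)$ for $t\gg 1$, since each of these decays strictly faster than $\ml{H}(t;n)$ in every dimension $n\geqslant 1$. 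Absorbing the lower-order terms yields the claimed lower bound $\|u(t,\cdot)\|_{L^2(\mb{R}^n)}\gtrsim \ml{H}(t;n)|P_{u_1}|$ and completes the argument. The crux is thus the $(a+b)^2$-trick combined with the precise sine-cosine decomposition of $\widehat{J}_1$: without it, Lemma \ref{Prop_Kernel_Est_Lower} would only deliver an upper bound on the sum and the lower estimate would be lost.
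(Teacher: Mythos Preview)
Your proof is correct and follows essentially the same route as the paper's: upper bound from Theorem~\ref{Thm_Estimates}, localisation to small frequencies, comparison with the profile $J_0P_{u_0}+J_1P_{u_1}$ via Theorem~\ref{Thm_DIFF}, and then a lower bound for $\|\chi_{\intt}(\xi)\widehat{J}_1\|_{L^2}$ obtained by separating the dominant $|\xi|^{-1}\sin$ piece from the $O(|\xi|^2)\cos$ piece and invoking Lemma~\ref{Prop_Kernel_Est_Lower}. The only technical difference is in this last separation: the paper uses the reverse triangle inequality $\|I_s+I_c\|_{L^2}\geqslant \bigl|\|I_s\|_{L^2}-\|I_c\|_{L^2}\bigr|$ directly on the norms, whereas you integrate the pointwise bound $(a+b)^2\geqslant \tfrac12 a^2-b^2$; both are equally valid and yield the same conclusion, so your remark that a ``naive triangle inequality\ldots only produces an upper bound'' slightly undersells the standard reverse triangle inequality, which would have sufficed here as well.
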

\begin{remark}
We observe from Theorem \ref{Thm_Asym} that for $t\gg1$, the upper bound and lower bound estimates for the solution itself in the $L^2$ norm are exactly the same provided $|P_{u_1}|\neq 0$. According to the fact \eqref{Pf_L11}, it means that the derived estimates in Theorem \ref{Thm_Asym} are optimal for all $n\geqslant 1$, i.e. $\|u(t,\cdot)\|_{L^2(\mb{R}^n)}\sim\ml{H}(t;n)$ provided $|P_{u_1}|\neq0$.
\end{remark}
\begin{proof}
	The upper bound estimate for $\|u(t,\cdot)\|_{L^2(\mb{R}^n)}$ is a trivial conclusion from Theorem \ref{Thm_Estimates}. Let us just focus on the lower bound estimate. With the help of the Minkowski inequality, we derive
	\begin{align*}
	\|\chi_{\intt}(D)u(t,\cdot)\|_{L^2(\mb{R}^n)}&\geqslant \|\chi_{\intt}(D)J_0(t,|\cdot|)\|_{L^2(\mb{R}^n)}|P_{u_0}|+\|\chi_{\intt}(D)J_1(t,|\cdot|)\|_{L^2(\mb{R}^n)}|P_{u_1}|\\
	&\quad-\left\|\chi_{\intt}(D)\left(u(t,\cdot)-\left(J_0(t,|\cdot|)P_{u_0}+J_1(t,|\cdot|)P_{u_1}\right)\right)\right\|_{L^2(\mb{R}^n)}.
	\end{align*}
	Concerning the first term on the right-hand side of the above inequality, it is nonnegative. Let us estimate the second term on the right-hand side. Applying the Parseval equality, one derives
	\begin{align*}
	\|\chi_{\intt}(D)J_1(t,|\cdot|)\|_{L^2(\mb{R}^n)}&=\left\|\chi_{\intt}(\xi)\left(\widehat{J}_{1,1}(t,|\xi|)+\widehat{J}_{1,2}(t,|\xi|)\right)\right\|_{L^2(\mb{R}^n)}\\
	&=\left\|\chi_{\intt}(\xi)\frac{(\gamma^2+1)|\xi|^2}{2\gamma^4+2\gamma(\gamma-1)|\xi|^2}\cos(\tilde{\gamma}|\xi|t)\mathrm{e}^{-\frac{\gamma^2+1}{2\gamma^2}|\xi|^2t}\right.\\
	&\quad\quad\left.+\chi_{\intt}(\xi)\frac{2\gamma^3-(\gamma-3)(\gamma+1)|\xi|^2}{2\gamma^3\tilde{\gamma}+2(\gamma-1)\tilde{\gamma}|\xi|^2}\frac{\sin(\tilde{\gamma}|\xi|t)}{|\xi|}\mathrm{e}^{-\frac{\gamma^2+1}{2\gamma^2}|\xi|^2t}\right\|_{L^2(\mb{R}^n)}.
	\end{align*}
	Thus, by using Lemmas \ref{Prop_Pointwise_Est} and \ref{Prop_Kernel_Est_Lower}, it yields
	\begin{align*}
	&\|\chi_{\intt}(D)J_1(t,|\cdot|)\|_{L^2(\mb{R}^n)}\\
	&\gtrsim\left|\left\|\chi_{\intt}(\xi)|\xi|^{-1}|\sin(\tilde{\gamma}|\xi|t)|\mathrm{e}^{-\frac{\gamma^2+1}{2\gamma^2}|\xi|^2t}\right\|_{L^2(\mb{R}^n)}-\left\|\chi_{\intt}(\xi)|\xi|^2|\cos(\tilde{\gamma}|\xi|t)|\mathrm{e}^{-\frac{\gamma^2+1}{2\gamma^2}|\xi|^2t}\right\|_{L^2(\mb{R}^n)}\right|\\
	&\gtrsim\left|\ml{H}(t;n)-t^{-1-\frac{n}{4}}\right|\gtrsim\ml{H}(t;n)
	\end{align*}
	for any $t\gg1$. Finally, by combining the previous derived inequalities and Theorem \ref{Thm_DIFF}, we claim
	\begin{align*}
	\|u(t,\cdot)\|_{L^2(\mb{R}^n)}&\gtrsim \|\chi_{\intt}(D)u(t,\cdot)\|_{L^2(\mb{R}^n)}\\
	&\gtrsim \ml{H}(t;n)|P_{u_1}|-\left(t^{-\frac{1}{2}-\frac{n}{4}}\|u_0\|_{L^{1,1}(\mb{R}^n)}+ t^{-\frac{n}{4}}\|u_1\|_{L^{1,1}(\mb{R}^n)}\right)\\
	&\gtrsim \ml{H}(t;n)|P_{u_1}|
	\end{align*}
	for any $t\gg1$, where we used $\ml{H}(t;n)\gg t^{-\frac{n}{4}}$ for large time. So, the proof of the desired theorem is complete.
	\end{proof}

\section{Singular limit problem}\label{Sec_Sing_Lim}
We introduce the following Cauchy problem for the singular limit problem for the MGT equation with memory in the dissipative case:
\begin{align}\label{Sing_Lim_MGT}
\begin{cases}
\tau v_{\tau,ttt}+v_{\tau,tt}-\Delta v_{\tau}-\Delta v_{\tau,t}+g\ast\Delta v_{\tau}=0,&x\in\mb{R}^n,\ t>0,\\
(v_{\tau},v_{\tau,t},v_{\tau,tt})(0,x)=(v_0,v_1,v_2)(x),&x\in\mb{R}^n,
\end{cases}
\end{align}
with $\tau\in(0,1)$ and initial data $v_0(x)=u_0(x)$, $v_1(x)=u_1(x)$ chosen in the viscoelastic damped wave equation \eqref{VDW_Equation}, where the derivative for $v_{\tau}=v_{\tau}(t,x)$ with respect to the time variable is denoted by $v_{\tau,t}:=\partial_tv_{\tau}$ and similarly for $v_{\tau,tt}$ as well as $v_{\tau,ttt}$. The relaxation function $g=g(t)$ is an exponential decay function defined in \eqref{Exponential_Decay_Relax}. Here, we assume $u_0(x)$ and $u_1(x)$ are nontrivial simultaneously which are useful to guarantee a nontrivial solution to \eqref{VDW_Equation}.

We should remark that the well-posedness and some decay properties to the Cauchy problem \eqref{Sing_Lim_MGT} for each $\tau\in(0,1]$ were given by the recent paper \cite{Bounadja-SaidHouari2020}, in which the ansatz with respect to the auxiliary past history variables plays an important role.

\begin{remark}
Concerning the characteristic equation for the MGT equation \eqref{Sing_Lim_MGT} with the exponential decay relaxation function \eqref{Exponential_Decay_Relax}, we may write down its characteristic equation by following the approach in Section \ref{Sec_Asy_Beh_Solution} as follows:
\begin{align*}
\tau \mu^4+(1+\tau\gamma)\mu^3+\left(|\xi|^2+\gamma\right)\mu^2+(1+\gamma)|\xi|^2\mu+(\gamma-1)|\xi|^2=0.
\end{align*}
It has the roots $\mu_j=\mu_j(|\xi|)$ for $j=1,2,3,4$, which can be asymptotically expanded as
\begin{align*}
\mu_{1,2}(|\xi|)&=\pm i\sqrt{\frac{\gamma-1}{\gamma}}|\xi|-\frac{(1-\tau)\gamma^2+\tau\gamma+1}{2\gamma^2}|\xi|^2+\ml{O}(|\xi|^3),\\
\mu_3(|\xi|)&=-\frac{1}{\tau}+\ml{O}(|\xi|),\ \ \mu_4(|\xi|)=-\gamma+\ml{O}(|\xi|),
\end{align*}
for $\xi\in\ml{Z}_{\intt}(\varepsilon)$, and
\begin{align*}
\mu_{1,2}(|\xi|)&=-\frac{1+\gamma\pm\sqrt{(1+\gamma)^2-4(\gamma-1)}}{2}+\ml{O}(|\xi|^{-1}),\\
\mu_{3,4}(|\xi|)&=\pm i\sqrt{\frac{1}{\tau}}|\xi|-\frac{1-\tau}{2\tau}+\ml{O}(|\xi|^{-1}),
\end{align*}
for $\xi\in\ml{Z}_{\extt}(N)$. Comparing the results in Section \ref{Sec_Asy_Beh_Solution} with the above asymptotic behaviors, we may conjecture the dissipative properties for the viscoelastic damped wave \eqref{VDW_Equation} and the MGT equation \eqref{Sing_Lim_MGT} are similar if the memory term owns the exponential decay relaxation function.
\end{remark}

Our aim in this section is to understand the convergence of solutions for the MGT equation in the dissipative case \eqref{Sing_Lim_MGT} to those for the viscoelastic damped wave equation \eqref{VDW_Equation} as $\tau\downarrow0$. In particular, we are interested in the rate of convergence under different assumptions of initial data and different norms.

\subsection{Singular limit relation on standard energy}
First of all, we act $\tau\partial_t$ on the equation in \eqref{VDW_Equation} and then add the resultant with the equation itself. Consequently, a kind of inhomogeneous MGT equation comes as follows:
\begin{align*}
\tau u_{ttt}+u_{tt}-\Delta u-\Delta u_t+g\ast \Delta u=F(u):=\tau \Delta u_t+\tau\Delta u_{tt}-\tau\Delta u+\gamma\tau g\ast\Delta u.
\end{align*}
Let us denote the difference between the MGT equation with memory and the viscoelastic damped wave equation with memory \eqref{Exponential_Decay_Relax} such that
\begin{align}\label{Diff}
w(t,x):=v_{\tau}(t,x)-u(t,x),
\end{align}
which satisfies the Cauchy problem
\begin{align}\label{Eq_Sing_Lim_Energy}
\begin{cases}
\tau w_{ttt}+w_{tt}-\Delta w-\Delta w_t+g\ast \Delta w=-F(u),&x\in\mb{R}^n,\ t>0,\\
(w,w_t,w_{tt})(0,x)=(0,0,w_2)(x),&x\in\mb{R}^n,
\end{cases}
\end{align}
where we denoted $w_2(x):=v_2(x)-(\Delta u_0(x)+\Delta u_1(x))$.
\begin{theorem}\label{Thm_Sing_Lim_Energy} Let us assume $(u_0,u_1)\in \widetilde{\ml{D}}_4(\mb{R}^n)$ and $v_2\in L^2(\mb{R}^n)$, where $u_0$ and $u_1$ are not zero simultaneously. Then, standard energy for the difference $w=w(t,x)$ defined in \eqref{Diff} fulfills the following estimate for $0<\tau\ll 1$: 
	\begin{align}\label{Est_Sing_Lim_Ener}
	&\tau\|w_{tt}(t,\cdot)\|_{L^2(\mb{R}^n)}^2+\|\nabla w_t(t,\cdot)\|_{L^2(\mb{R}^n)}^2+\|\nabla w(t,\cdot)\|_{L^2(\mb{R}^n)}^2+\tau\|w_t(t,\cdot)\|_{L^2(\mb{R}^n)}^2\notag\\
	&+\gamma\int_0^tg(t-s)\|\nabla w(t,\cdot)-\nabla w(s,\cdot)\|_{L^2(\mb{R}^n)}^2\mathrm{d}s\notag\\
	&\leqslant C\tau\|v_2-\Delta u_0-\Delta u_1\|_{L^2(\mb{R}^n)}^2+C\tau^2\|u_0\|_{H^4(\mb{R}^n)\cap L^{1,1}(\mb{R}^n)}^2+C\tau^2\kappa_n(t)\|u_1\|^2_{H^4(\mb{R}^n)\cap L^{1,1}(\mb{R}^n)},
	\end{align}
	where the time-dependent coefficient is defined by
	\begin{align}\label{kappa}
	\kappa_n(t):=\begin{cases}
	(1+t)^{\frac{1}{2}}&\mbox{if}\ \ n=1,\\
	\ln(\mathrm{e}+t)&\mbox{if}\ \ n=2,\\
	1&\mbox{if}\ \ n\geqslant 3.
	\end{cases}
	\end{align}
	Additionally, if $|P_{u_1}|=0$, then the coefficient $\kappa_n(t)\equiv1$ in \eqref{Est_Sing_Lim_Ener} for any $n\geqslant 1$.
\end{theorem}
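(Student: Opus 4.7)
The plan is to view $w = v_\tau - u$ as the solution of the inhomogeneous MGT equation with memory \eqref{Eq_Sing_Lim_Energy}, whose source $-F(u)$ carries an explicit $\tau$ factor and whose initial data vanish except for $w_{tt}(0) = w_2$. Because $F(u) = \ml{O}(\tau)$, any reasonable energy estimate should automatically produce an $\ml{O}(\tau^2)$ bound on the right-hand side; the real task is to set up the identity so that every summand on the left-hand side of \eqref{Est_Sing_Lim_Ener} appears with the correct coefficient.

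First I would derive an energy identity for \eqref{Eq_Sing_Lim_Energy} by testing it against a tailored combination of multipliers, e.g.\ $w_t$ together with $\tau w_{tt}$ (and, if needed, a small $\tau w$ correction to produce the $\tau\|w_t\|^2$ summand), chosen so that the resulting terms collect into total time derivatives of $\|\nabla w\|_{L^2}^2$, $\tau\|w_t\|_{L^2}^2$, $\|\nabla w_t\|_{L^2}^2$, and $\tau\|w_{tt}\|_{L^2}^2$, plus the non-negative dissipation $\|\nabla w_t\|_{L^2}^2$, plus cross terms controllable via Young's inequality in the small-$\tau$ regime. For the convolution term $\int(g \ast \Delta w)\cdot(\text{multiplier})\, dx$ I would use the standard Dafermos-type memory identity: thanks to $g' = -\gamma g$, it produces a time derivative of a positive quadratic plus the dissipative contribution $\gamma\int_0^t g(t-s)\|\nabla w(t) - \nabla w(s)\|_{L^2}^2 ds$ appearing on the left of \eqref{Est_Sing_Lim_Ener}.

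Next I would estimate the source. Since $F(u) = \tau(\Delta u_t + \Delta u_{tt} - \Delta u + \gamma g \ast \Delta u)$, I would eliminate the highest time derivative $\Delta u_{tt}$ using \eqref{VDW_Equation} itself, rewriting it as $\Delta^2 u + \Delta^2 u_t - g \ast \Delta^2 u$, which explains the $H^4$ regularity imposed on $u_0, u_1$. The remaining spatial-derivative norms $\|\,|D|^j u(t,\cdot)\|_{L^2}$ and $\|\,|D|^j u_t(t,\cdot)\|_{L^2}$ for $j \in \{2,4\}$ are then controlled by Theorems \ref{Thm_Estimates} and \ref{Thm_Estimates_Deriv}, while $\|g \ast |D|^j u\|_{L^2}$ inherits the decay of $\|\,|D|^j u\|_{L^2}$ thanks to $\|g\|_{L^1(0,t)} \leq 1/\gamma$. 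Pairing $-F(u)$ with the test function and using Young's inequality to absorb the $w$-quadratic fragments into the left-hand side, the surviving right-hand side consists of time integrals of shape $\tau^2 \int_0^t (\cdots)^2 ds$. The slowest-decaying contribution, produced directly or after an integration by parts in time that shifts a derivative from $u_t$ onto the test function, is $\tau^2 \int_0^t \|u_t(s,\cdot)\|_{L^2}^2 ds$; by Theorem \ref{Thm_Estimates_Deriv} with $s = 0$, the $|P_{u_1}| \neq 0$ part of $\|u_t\|_{L^2}^2$ decays exactly like $(1+s)^{-n/2}$, whose antiderivative is precisely $\kappa_n(t)$ from \eqref{kappa}. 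When $|P_{u_1}| = 0$ this slow contribution is absent, leaving only uniformly integrable rates and hence the improvement $\kappa_n \equiv 1$.

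The main obstacle I foresee is the bookkeeping: tracking the numerous cross terms generated by the combined multiplier through both the MGT operator and the memory convolution, and calibrating Young's inequalities (together with the time integrations by parts) so that every non-energy remainder on the right carries an explicit $\tau^2$ weight and decays fast enough in $s$ to survive time integration. Once this is done, a direct Gronwall argument, kept essentially linear by the dissipations $\|\nabla w_t\|_{L^2}^2$ and $\gamma (g \Box \nabla w)(t)$, combined with the initial data $w(0) = w_t(0) = 0$ and $w_{tt}(0) = w_2$, yields the stated bound \eqref{Est_Sing_Lim_Ener} with the sharp $\kappa_n(t)$ coefficient.
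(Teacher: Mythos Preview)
Your approach is essentially the one the paper takes: test \eqref{Eq_Sing_Lim_Energy} with a linear combination of $w_t$ and $w_{tt}$, use the Dafermos-type identity for the memory term (via $g'=-\gamma g$), rewrite $\Delta u_{tt}$ through \eqref{VDW_Equation} to explain the $H^4$ requirement, and then bound the resulting source norms by the decay theorems of Section~\ref{Sec_Exp_Asy_Prof}. Two calibration points are worth flagging, since they are exactly the ``bookkeeping'' you anticipate.

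First, the paper tests with $w_{tt}+kw_t$ where $k$ is a \emph{fixed} constant independent of $\tau$ (concretely $k=2$, $\varepsilon_1=1/6$ for $0<\tau\ll1$), not with $w_t+\tau w_{tt}$ as you propose. This matters: the energy piece $\frac{\tau}{2}\|w_{tt}\|_{L^2}^2$ comes from $\tau w_{ttt}\cdot w_{tt}$, and with your weighting that term would instead produce $\frac{\tau^2}{2}\|w_{tt}\|_{L^2}^2$, which is too weak for the left-hand side of \eqref{Est_Sing_Lim_Ener}. The $\tau\|w_t\|_{L^2}^2$ summand then emerges not from a $\tau w$ multiplier but from completing the square $\tau\|w_{tt}+kw_t\|^2+k(1-\tau k)\|w_t\|^2$ inside the total energy and invoking an elementary coercivity lemma (the paper cites \cite[Lemma~2.6]{LasieckaWang2016}).

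Second, the paper handles $F(u)$ by writing it in divergence form and integrating by parts in \emph{space}, so the pairing is against $w_{tt}$ and $\nabla w_t$, with no integration by parts in time. The slowest-decaying source contribution $(1+s)^{-n/2}$ then arises from $\|\nabla u(s,\cdot)\|_{L^2}^2$ (more precisely, from any first-order quantity carrying the $|P_{u_1}|$ term in Theorem~\ref{Thm_Estimates}), which has the same rate as your $\|u_t\|_{L^2}^2$ but is reached by a cleaner route. After absorption, no Gronwall step is needed: one simply integrates the resulting differential inequality in $t$.
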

\begin{remark}
If we assume additionally $|P_{u_1}|=0$ as the last statement in Theorem \ref{Thm_Sing_Lim_Energy}, according to Remark \ref{Remark_4.2}, the global (in time) convergence result can be extended to all $u_1$ with $n\geqslant 1$. The main change is the additional decay function $(1+t)^{-1}$ localizing on the time-dependent function of the norm for $u_1$ in \eqref{Est_01} as well as \eqref{Est_02}. Since $(1+t)^{-1-\frac{n}{2}}\in L^1((0,\infty))$ for any $n\geqslant 1$, the above statement can be proved easily.
\end{remark}
\begin{remark}
Let us analyze local (in time) convergence results as $\tau\downarrow 0$. Actually, the choice of initial data exerts some influence on the rate of convergence, which can be summarized in the next table (conv. means convergence):
\begin{table}[!htbp]
	\centering
\begin{tabular}{ccc}
	\toprule  
	energy terms & $v_2\neq \Delta u_0+\Delta u_1$& $v_2=\Delta u_0+\Delta u_1$\\
	\midrule  
	$\|w_{tt}(t,\cdot)\|_{L^2(\mb{R}^n)}^2+\|w_t(t,\cdot)\|_{L^2(\mb{R}^n)}^2$& no conv.& conv. with the rate $\ml{O}(\tau)$\\
	$\|\nabla w_{t}(t,\cdot)\|_{L^2(\mb{R}^n)}^2+\|\nabla w(t,\cdot)\|_{L^2(\mb{R}^n)}^2$& conv. with the rate $\ml{O}(\tau)$& conv. with the rate $\ml{O}(\tau^2)$\\
	\bottomrule 
\end{tabular}
\caption{Convergence analysis}\label{tab_1}
\end{table}
\end{remark}
\begin{remark}
Let us turn to analyze global (in time) convergence results as $\tau\downarrow0$. Indeed, the choice of initial data plays an extremely important role on the results. Concerning the general case $|P_{u_1}|\neq0$, to arrive at the convergence result with respect to $\tau\downarrow0$ for all $t>0$, we should consider higher dimensional cases $n\geqslant 3$ in Theorem \ref{Thm_Sing_Lim_Energy}. Pay attention, since $|P_{u_1}|\neq0$, it holds $u_1\neq0$. Furthermore, the rates of convergence are exactly the same as those in Table \ref{tab_1}.
\end{remark}
\begin{proof}
To begin with the proof, let us define an energy
\begin{align*}
\ml{E}_1[w](t)&:=\tau\|w_{tt}(t,\cdot)\|_{L^2(\mb{R}^n)}^2+\|\nabla w_t(t,\cdot)\|_{L^2(\mb{R}^n)}^2-2\int_{\mb{R}^n}\Delta w(t,x)w_t(t,x)\mathrm{d}x\\
&\quad\ +\gamma\int_0^tg(t-s)\|\nabla w(t,\cdot)-\nabla w(s,\cdot)\|_{L^2(\mb{R}^n)}^2\mathrm{d}s+g(t)\|\nabla w(t,\cdot)\|_{L^2(\mb{R}^n)}^2\\
&\quad\ +2\int_0^tg(t-s)\int_{\mb{R}^n}\Delta w(s,x)w_t(t,x)\mathrm{d}x\mathrm{d}s.
\end{align*}
According to the equation in \eqref{Eq_Sing_Lim_Energy}, one may get
\begin{align*}
2\tau\int_{\mb{R}^n}w_{ttt}(t,x)w_{tt}(t,x)\mathrm{d}x&=2\int_{\mb{R}^n}(-F(u)-w_{tt}+\Delta w+\Delta w_t-g\ast\Delta w)(t,x)w_{tt}(t,x)\mathrm{d}x\\
&=-2\int_{\mb{R}^n}F(u)(t,x)w_{tt}(t,x)\mathrm{d}x-2\|w_{tt}(t,\cdot)\|_{L^2(\mb{R}^n)}^2\\
&\quad+2\int_{\mb{R}^n}\Delta w(t,x)w_{tt}(t,x)\mathrm{d}x-2\int_{\mb{R}^n}\nabla w_{tt}(t,x)\cdot\nabla w_t(t,x)\mathrm{d}x\\
&\quad-2\int_{\mb{R}^n}(g\ast\Delta w)(t,x)w_{tt}(t,x)\mathrm{d}x,
\end{align*}
where we used integrations by parts in the last line.\\
Hence, we arrive at
\begin{align}\label{d_E1}
\frac{\mathrm{d}}{\mathrm{d}t}\ml{E}_1[w](t)&=-2\int_{\mb{R}^n}F(u)(t,x)w_{tt}(t,x)\mathrm{d}x-2\|w_{tt}(t,\cdot)\|_{L^2(\mb{R}^n)}^2+2\|\nabla w_t(t,\cdot)\|_{L^2(\mb{R}^n)}^2\notag\\
&\quad-\gamma^2\int_0^tg(t-s)\|\nabla w(t,\cdot)-\nabla w(s,\cdot)\|_{L^2(\mb{R}^n)}^2\mathrm{d}s-\gamma g(t)\|\nabla w(t,\cdot)\|_{L^2(\mb{R}^n)}^2,
\end{align}
by applying the fact that
\begin{align*}
&2\gamma\int_0^tg(t-s)\int_{\mb{R}^n}\nabla w_t(t,x)\cdot\nabla w(t,x)\mathrm{d}x\mathrm{d}s\\
&=-2\int_{\mb{R}^n}\Delta w(t,x)w_t(t,x)\mathrm{d}x-2g(t)\int_{\mb{R}^n}\nabla w_t(t,x)\cdot\nabla w(t,x)\mathrm{d}x.
\end{align*}
We next define another energy
\begin{align*}
\ml{E}_2[w](t)&:=\|\nabla w(t,\cdot)\|_{L^2(\mb{R}^n)}^2+\|w_t(t,\cdot)\|_{L^2(\mb{R}^n)}^2+2\tau\int_{\mb{R}^n}w_{tt}(t,x)w_t(t,x)\mathrm{d}x\\
&\quad\ +\int_0^tg(t-s)\|\nabla w(t,\cdot)-\nabla w(s,\cdot)\|_{L^2(\mb{R}^n)}^2\mathrm{d}s+\frac{1}{\gamma}(g(t)-1)\|\nabla w(t,\cdot)\|_{L^2(\mb{R}^n)}^2.
\end{align*}
Differentiating the above energy with respect to the time variable, it yields
\begin{align}\label{d_E2}
\frac{\mathrm{d}}{\mathrm{d}t}\ml{E}_2[w](t)&=-2\int_{\mb{R}^n}F(u)(t,x)w_t(t,x)\mathrm{d}x+2\tau\|w_{tt}(t,\cdot)\|_{L^2(\mb{R}^n)}^2-2\|\nabla w_t(t,\cdot)\|_{L^2(\mb{R}^n)}^2\notag\\
&\quad-\gamma\int_0^tg(t-s)\|\nabla w(t,\cdot)-\nabla w(s,\cdot)\|_{L^2(\mb{R}^n)}^2\mathrm{d}s-g(t)\|\nabla w(t,\cdot)\|_{L^2(\mb{R}^n)}^2.
\end{align}
Let us now introduce a total energy such that
\begin{align*}
\ml{E}_{\mathrm{T}}[w](t):=\ml{E}_1[w](t)+k\ml{E}_2[w](t),
\end{align*}
with a suitable positive constant $k$ (independent of $\tau$) to be determined later, which can be represented by
\begin{align*}
\ml{E}_{\mathrm{T}}[w](t)&=\|\nabla w_t(t,\cdot)+\nabla w(t,\cdot)\|_{L^2(\mb{R}^n)}^2+\tau\|w_{tt}(t,\cdot)+kw_t(t,\cdot)\|_{L^2(\mb{R}^n)}^2+k(1-\tau k)\|w_t(t,\cdot)\|_{L^2(\mb{R}^n)}^2\\
&\quad+(k-1)\|\nabla w(t,\cdot)\|_{L^2(\mb{R}^n)}^2+(\gamma+k)\int_0^tg(t-s)\|\nabla w(t,\cdot)-\nabla w(s,\cdot)\|_{L^2(\mb{R}^n)}^2\mathrm{d}s\\
&\quad+\left(g(t)+\frac{k}{\gamma}(g(t)-1)\right)\|\nabla w(t,\cdot)\|_{L^2(\mb{R}^n)}^2+2\int_{\mb{R}^n}(g\ast(w_t\Delta w))(t,x)\mathrm{d}x.
\end{align*}
In other words, from \eqref{d_E1} and \eqref{d_E2} we found
\begin{align}\label{Est_-1}
&\frac{\mathrm{d}}{\mathrm{d}t}\ml{E}_{\mathrm{T}}[w](t)+(2-2\tau k)\|w_{tt}(t,\cdot)\|_{L^2(\mb{R}^n)}^2+(2k-2)\|\nabla w_t(t,\cdot)\|_{L^2(\mb{R}^n)}^2\notag\\
&+(\gamma+k)g(t)\|\nabla w(t,\cdot)\|_{L^2(\mb{R}^n)}^2+(\gamma^2+\gamma k)\int_0^tg(t-s)\|\nabla w(t,\cdot)-\nabla w(s,\cdot)\|_{L^2(\mb{R}^n)}^2\mathrm{d}s\notag\\
&=-2\int_{\mb{R}^n}F(u)(t,x)(w_{tt}(t,x)+kw_t(t,x))\mathrm{d}x.
\end{align}
From the equation in \eqref{VDW_Equation}, we may rewrite
\begin{align*}
F(u)(t,x)&=\tau\left(-\Delta u+\Delta u_t+\Delta^2 u+\Delta^2 u_t-g\ast\Delta^2 u+\gamma g\ast\Delta u\right)(t,x)\\
&=\mathrm{div}\left(\tau\nabla\left(-u+u_t+\Delta u+\Delta u_t-g\ast\Delta u+\gamma g\ast u\right)\right)(t,x).
\end{align*}
By using integration by parts and the Cauchy-Schwarz inequality, we get
\begin{align}\label{Est_00}
&-2\int_{\mb{R}^n}F(u)(t,x)(w_{tt}(t,x)+kw_t(t,x))\mathrm{d}x\notag\\
&\leqslant\frac{\tau^2}{4\varepsilon_1}\left(\sum\limits_{j,k=0,1}\left\|\partial_t^j\Delta^{k+1}u(t,\cdot)\right\|_{L^2(\mb{R}^n)}^2+\gamma^2\|(g\ast\Delta u)(t,\cdot)\|_{L^2(\mb{R}^n)}^2+\left\|(g\ast\Delta^2u)(t,\cdot)\right\|_{L^2(\mb{R}^n)}^2\right)\notag\\
&\quad+\frac{k^2\tau^2}{4\varepsilon_1}\left(\sum\limits_{j,k=0,1}\left\|\partial_t^j\nabla\Delta^k u(t,\cdot)\right\|_{L^2(\mb{R}^n)}^2+\gamma^2\|(g\ast\nabla u)(t,\cdot)\|_{L^2(\mb{R}^n)}^2+\|(g\ast\nabla\Delta u)(t,\cdot)\|_{L^2(\mb{R}^n)}^2\right)\notag\\
&\quad+6\varepsilon_1\left(\|w_{tt}(t,\cdot)\|_{L^2(\mb{R}^n)}^2+\|\nabla w_t(t,\cdot)\|_{L^2(\mb{R}^n)}^2\right),
\end{align}
where $\varepsilon_1$ is a suitable constant (independent of $\tau$) to be fixed later.\\
For one thing, let us recall some estimates derived in Theorem \ref{Thm_Estimates} that
\begin{align}\label{Est_01}
&\sum\limits_{j,k=0,1}\left(\left\|\partial_t^j\Delta^{k+1}u(t,\cdot)\right\|_{L^2(\mb{R}^n)}^2+\left\|\partial_t^j\nabla \Delta^{k}u(t,\cdot)\right\|_{L^2(\mb{R}^n)}^2\right)\notag\\
&\leqslant C (1+t)^{-1-\frac{n}{2}}\|u_0\|^2_{H^4(\mb{R}^n)\cap L^{1,1}(\mb{R}^n)}+C(1+t)^{-\frac{n}{2}}\|u_1\|^2_{H^4(\mb{R}^n)\cap L^{1,1}(\mb{R}^n)},
\end{align}
where $C$ is a positive constant independent of $\tau$.\\
For another, since Minkowski's integral inequality and Theorem \ref{Thm_Estimates}, we see
\begin{align*}
\left\|(g\ast\nabla^k u)(t,\cdot)\right\|_{L^2(\mb{R}^n)}^2&=\int_{\mb{R}^n}\left|\int_0^tg(t-s)\nabla^k u(s,x)\mathrm{d}s\right|^2\mathrm{d}x\\
&\leqslant\left(\int_0^tg(t-s)\left\|\nabla^k u(s,\cdot)\right\|_{L^2(\mb{R}^n)}\mathrm{d}s\right)^2\\
&\leqslant C\left(\int_0^tg(t-s)(1+s)^{-\frac{k}{2}-\frac{n}{4}}\mathrm{d}s\right)^2\|u_0\|_{H^k(\mb{R}^n)\cap L^{1,1}(\mb{R}^n)}^2\\
&\quad+C\left(\int_0^tg(t-s)(1+s)^{-\frac{k-1}{2}-\frac{n}{4}}\mathrm{d}s\right)^2\|u_1\|_{H^{\max\{k-2,0\}}(\mb{R}^n)\cap L^{1,1}(\mb{R}^n)}^2
\end{align*}
for $k=1,2,3,4$. Again, $C$ is a positive constant independent of $\tau$. Next, we compute
\begin{align*}
\int_0^tg(t-s)(1+s)^{-\frac{k}{2}-\frac{n}{4}}\mathrm{d}s&=\mathrm{e}^{-\gamma t}\int_0^t\mathrm{e}^{\gamma s}(1+s)^{-\frac{k}{2}-\frac{n}{4}}\mathrm{d}s\\
&=\mathrm{e}^{-\gamma t}\int_0^{t/2}\mathrm{e}^{\gamma s}(1+s)^{-\frac{k}{2}-\frac{n}{4}}\mathrm{d}s+\mathrm{e}^{-\gamma t}\int_{t/2}^t\mathrm{e}^{\gamma s}(1+s)^{-\frac{k}{2}-\frac{n}{4}}\mathrm{d}s\\
&\leqslant \mathrm{e}^{-\gamma t/2}\int_0^{t/2}(1+s)^{-\frac{k}{2}-\frac{n}{4}}\mathrm{d}s+\mathrm{e}^{-\gamma t}(1+t/2)^{-\frac{k}{2}-\frac{n}{4}}\int_{t/2}^t\mathrm{e}^{\gamma s}\mathrm{d}s\\
&\leqslant\frac{4}{4-2k-n}\mathrm{e}^{-\gamma t/2}\left((1+t/2)^{1-\frac{k}{2}-\frac{n}{4}}-1\right)+\frac{1}{\gamma}(1+t/2)^{-\frac{k}{2}-\frac{n}{4}}\left(1-\mathrm{e}^{-\gamma t/2}\right)\\
&\leqslant C(1+t)^{-\frac{k}{2}-\frac{n}{4}}.
\end{align*}
Namely, the following estimates hold:
\begin{align}\label{Est_02}
\left\|(g\ast\nabla^k u)(t,\cdot)\right\|_{L^2(\mb{R}^n)}^2&\leqslant C(1+t)^{-k-\frac{n}{2}}\|u_0\|^2_{H^k(\mb{R}^n)\cap L^{1,1}(\mb{R}^n)}\notag\\
&\quad+C(1+t)^{1-k-\frac{n}{2}}\|u_1\|^2_{H^{\max\{k-2,0\}}(\mb{R}^n)\cap L^{1,1}(\mb{R}^n)}.
\end{align}
Combining with \eqref{Est_-1}, \eqref{Est_00}, \eqref{Est_01} and \eqref{Est_02}, they conclude
\begin{align}\label{Est_03}
&\frac{\mathrm{d}}{\mathrm{d}t}\ml{E}_{\mathrm{T}}[w](t)+(2-2\tau k-6\varepsilon_1)\|w_{tt}(t,\cdot)\|_{L^2(\mb{R}^n)}^2+(2k-2-6\varepsilon_1)\|\nabla w_t(t,\cdot)\|_{L^2(\mb{R}^n)}^2\notag\\
&+(\gamma+k)g(t)\|\nabla w(t,\cdot)\|_{L^2(\mb{R}^n)}^2+\gamma(\gamma+ k)\int_0^tg(t-s)\|\nabla w(t,\cdot)-\nabla w(s,\cdot)\|_{L^2(\mb{R}^n)}^2\mathrm{d}s\notag\\
&\leqslant  C\tau^2 (1+t)^{-1-\frac{n}{2}}\|u_0\|^2_{H^4(\mb{R}^n)\cap L^{1,1}(\mb{R}^n)}+C\tau^2(1+t)^{-\frac{n}{2}}\|u_1\|^2_{H^4(\mb{R}^n)\cap L^{1,1}(\mb{R}^n)}.
\end{align}
By choosing $k\in[1+3\varepsilon_1,(1-3\varepsilon_1)/\tau]$ carrying $\varepsilon_1\in(0,(1-\tau)/(3(1+\tau))]$, we claim that
\begin{align*}
2-2\tau k-6\varepsilon_1\geqslant 0\ \ \mbox{and}\ \ 2k-2-6\varepsilon_1\geqslant0.
\end{align*}
Here, letting $0<\tau\ll 1$, we are able to choose parameters $k$ and $\varepsilon_1$ independent of $\tau$. For example, one may choose $k=2$, $\varepsilon_1=1/6$ as $0<\tau\ll 1$.\\
Then, integrating \eqref{Est_03} over $[0,t]$, it shows immediately
\begin{align}\label{Eq_05}
&\ml{E}_{\mathrm{T}}[w](t)+C\int_0^t\left(\|w_{tt}(s,\cdot)\|_{L^2(\mb{R}^n)}^2+\|\nabla w_t(s,\cdot)\|_{L^2(\mb{R}^n)}^2+g(s)\|\nabla w(s,\cdot)\|_{L^2(\mb{R}^n)}^2\right)\notag\\
&+C\int_0^t\int_0^s g(s-\eta)\|\nabla w(s,\cdot)-\nabla w(\eta,\cdot)\|_{L^2(\mb{R}^n)}^2\mathrm{d}\eta\mathrm{d}s\notag\\
&\leqslant \tau\|v_2-\Delta u_0-\Delta u_1\|_{L^2(\mb{R}^n)}^2+C\tau^2\|u_0\|_{H^4(\mb{R}^n)\cap L^{1,1}(\mb{R}^n)}^2+C\tau^2\kappa_n(t)\|u_1\|^2_{H^4(\mb{R}^n)\cap L^{1,1}(\mb{R}^n)}
\end{align}
In the above calculations, we employed $\ml{E}_{\mathrm{T}}[w](0)=\tau\|w_2\|_{L^2(\mb{R}^n)}^2$.\\
Finally, motived by \cite{LasieckaWang2016}, next standard energy can be shown:
\begin{align*}
\ml{E}_{\mathrm{S}}[w](t)&:=\tau\|w_{tt}(t,\cdot)\|_{L^2(\mb{R}^n)}^2+\|\nabla w_t(t,\cdot)\|_{L^2(\mb{R}^n)}^2+\|\nabla w(t,\cdot)\|_{L^2(\mb{R}^n)}^2+\tau\|w_t(t,\cdot)\|_{L^2(\mb{R}^n)}^2\\
&\quad\ +\gamma\int_0^tg(t-s)\|\nabla w(t,\cdot)-\nabla w(s,\cdot)\|_{L^2(\mb{R}^n)}^2\mathrm{d}s.
\end{align*}
Performing the similar computation as the proof of Lemma 3.1 in \cite{LasieckaWang2016}, there is a positive constant $C$ independent of $\tau$ such that 
\begin{align}\label{Eq_06}
C\ml{E}_{\mathrm{S}}[w](t)\leqslant \ml{E}_{\mathrm{T}}[w](t).
\end{align}
Let us explain more detail for the terms including $\tau$ in the total energy $\ml{E}_{\mathrm{T}}[w](t)$. We apply Lemma 2.6 in \cite{LasieckaWang2016} to get
\begin{align*}
&\tau\|w_{tt}(t,\cdot)+kw_t(t,\cdot)\|_{L^2(\mb{R}^n)}^2+k(1-\tau k)\|w_t(t,\cdot)\|_{L^2(\mb{R}^n)}^2\\
&=\tau\left(\|w_{tt}(t,\cdot)+kw_t(t,\cdot)\|_{L^2(\mb{R}^n)}^2+\frac{1-\tau k}{\tau k}\|kw_t(t,\cdot)\|_{L^2(\mb{R}^n)}^2\right)\\
&\geqslant C_1\tau \left(\|w_{tt}(t,\cdot)\|_{L^2(\mb{R}^n)}^2+\|w_t(t,\cdot)\|_{L^2(\mb{R}^n)}^2\right),
\end{align*}
where
\begin{align*}
C_1=\min\left\{1-\frac{2}{2+C_0},\frac{C_0}{2}\right\}\ \ \mbox{and}\ \ C_0:=\frac{1-\tau k}{\tau k}.
\end{align*}
Here, we observe that
\begin{align*}
 C_1\tau=\min\left\{\tau-\frac{2k\tau^2}{1+\tau k},\frac{1}{2k}-\frac{\tau}{2} \right\}\geqslant C\tau\ \ \mbox{as}\ \ 0<\tau\ll 1.
\end{align*}
In conclusion, summarizing \eqref{Eq_05} and \eqref{Eq_06}, our proof is complete.
\end{proof}
\subsection{Singular limit relation on the solution itself}
In the last subsection, we found a convergence result for standard energy, in particular, the solution of the MGT equation with memory converges to that of the viscoelastic damped wave equation with memory as $\tau\downarrow 0$ in the $\dot{H}^1$ space. The natural question is the convergence behavior in the $L^2$ space, which implies the singular limit for the solution itself.

 Nevertheless, the singular limit for the solution itself is not a simple generalization of the previous theorem because the $L^2$ norm for the solution itself always does not be included in classical or standard energies, especially, for the massless models. With the aim of overcoming the difficulty, we are motived by the papers \cite{Ike-2003,Ikehata-Nishihara-2003} to define an integral form such that
 \begin{align}\label{Variable_New}
 W(t,x):=\int_0^tw(s,x)\mathrm{d}s.
 \end{align}
 
 In order to compensate the influence of memory term in \eqref{VDW_Equation} as well as \eqref{Sing_Lim_MGT}, we will transfer the second- and third-order integro-differential equations to fourth-order differential equations. Due to the structure of the relaxation function \eqref{Exponential_Decay_Relax}, the equation in \eqref{VDW_Equation} can be rewritten by acting the operator $\partial_t+\gamma\ml{I}$ as follows:
 \begin{align}\label{Eq_07}
 u_{ttt}-\Delta u_{tt}+\gamma u_{tt}-(\gamma+1)\Delta u_t+(1-\gamma)\Delta u=0.
 \end{align}
 By processing the same way, the equation in \eqref{Sing_Lim_MGT} can be rewritten by
 \begin{align}\label{Eq_08}
 \tau v_{\tau,tttt}+(1+\gamma \tau)v_{\tau,ttt}-\Delta v_{\tau,tt}+\gamma v_{\tau,tt}-(\gamma+1)\Delta v_{\tau,t}+(1-\gamma)\Delta v_{\tau}=0.
 \end{align}
 Next, with the computations that $\tau\partial_t$ acts on \eqref{Eq_07} and the resultant equation pluses \eqref{Eq_07}, it yields
 \begin{align}\label{Eq_09}
\tau u_{tttt}+(1+\gamma\tau) u_{ttt}-\Delta u_{tt}+\gamma u_{tt}-(\gamma+1)\Delta u_t+(1-\gamma)\Delta u=\widetilde{F}(u),
 \end{align} 
 where $\widetilde{F}(u)=\widetilde{F}(u)(t,x)$ such that
 \begin{align*}
 \widetilde{F}(u)(t,x):=\tau\left(\Delta u_{ttt}+(\gamma+1)\Delta u_{tt}-(1-\gamma)\Delta u_t\right)(t,x).
 \end{align*}
 From \eqref{Eq_08} and \eqref{Eq_09}, the unknown $w=w(t,x)$ according to the definition \eqref{Diff} solves the following Cauchy problem for fourth-order (in time) evolution partial differential equation:
 \begin{align}\label{Eq_10}
 \begin{cases}
 \tau w_{tttt}+(1+\gamma\tau) w_{ttt}-\Delta w_{tt}+\gamma w_{tt}-(\gamma+1)\Delta w_t+(1-\gamma)\Delta w=-\widetilde{F}(u),&x\in\mb{R}^n,\ t>0,\\
 (w,w_t,w_{tt},w_{ttt})(0,x)=(0,0,w_2,w_3)(x),&x\in\mb{R}^n,
 \end{cases}
 \end{align}
 where initial data is given by
 \begin{align*}
 w_2(x):&=v_2(x)-(\Delta u_0(x)+\Delta u_1(x)),\\
 w_3(x):&=-\frac{1}{\tau}v_2(x)-\Delta^2u_0(x)-\Delta^2 u_1(x)+\frac{1-\tau}{\tau}\Delta u_1(x)+\frac{1+\tau}{\tau}\Delta u_0(x).
 \end{align*}
 From the new variable \eqref{Variable_New} and the equation in \eqref{Eq_10}, it shows that
\begin{align}\label{Eq_11}
&\tau W_{tttt}+(1+\gamma\tau) W_{ttt}-\Delta W_{tt}+\gamma W_{tt}-(\gamma+1)\Delta W_t+(1-\gamma)\Delta W\notag\\
&=\tau w_{ttt}+(1+\gamma\tau)w_{tt}-\Delta w_t+\gamma w_t-(\gamma+1)\Delta w+(1-\gamma)\int_0^t\Delta w(s,x)\mathrm{d}s\notag\\
&=-\int_0^t\widetilde{F}(u)(s,x)\mathrm{d}s-\tau\ml{I}_{\mathrm{Data}}(x),
\end{align}
where the combination of initial data is independent of $\tau$ fulfilling
\begin{align*}
\ml{I}_{\mathrm{Data}}(x):=-\gamma v_2(x)+\Delta^2u_0(x)+\Delta^2u_1(x)+(1+\gamma)\Delta u_1(x)-(1-\gamma)\Delta u_0(x).
\end{align*}
For this reason, we will mainly study the equation \eqref{Eq_11} since the fact that
\begin{align}\label{Rela_Sing_Lim_Solu}
\|W_t(t,\cdot)\|_{L^2(\mb{R}^n)}^2=\|w(t,\cdot)\|_{L^2(\mb{R}^n)}^2.
\end{align}

\begin{theorem}\label{Thm_Sing_Lim_Solution}
Let $\gamma>5$ and $n\geqslant 3$. Let us assume $(u_0,u_1)\in \widetilde{\ml{D}}_4(\mb{R}^n)$ and $v_2\in L^2(\mb{R}^n)$ carrying $|x|v_2\in L^2(\mb{R}^n)$, where $u_0$ and $u_1$ are not zero simultaneously. Then, the difference $w=w(t,x)$ defined in \eqref{Diff} fulfills the following estimate for $0<\tau\ll 1$: 
\begin{align*}
\|w(t,\cdot)\|_{L^2(\mb{R}^n)}^2&\leqslant C\tau^2\left(\|v_2\|_{L^2(\mb{R}^n)}^2+\|\,|x|v_2\|_{L^2(\mb{R}^n)}^2\right)+C\tau\|v_2-\Delta u_0-\Delta u_1\|_{L^2(\mb{R}^n)}^2\\
&\quad+ C\tau^2 \left(\|u_0\|_{H^4(\mb{R}^n)\cap L^{1,1}(\mb{R}^n)}^2+\|u_1\|_{H^4(\mb{R}^n)\cap L^{1,1}(\mb{R}^n)}^2\right).
\end{align*}
\end{theorem}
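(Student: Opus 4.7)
The plan is to transplant the Lyapunov-type energy argument of Theorem \ref{Thm_Sing_Lim_Energy} onto the fourth-order equation \eqref{Eq_11} for the integrated unknown $W(t,x)=\int_0^t w(s,x)\,\mathrm{d}s$, exploiting the identity \eqref{Rela_Sing_Lim_Solu} which turns the sought $L^2$-estimate for $w$ into an $L^2$-estimate for $W_t$. As a preparatory step I would telescope the source in \eqref{Eq_11}: since $\widetilde{F}(u)$ is a pure $t$-derivative, one computes
\[
\int_0^t\widetilde{F}(u)(s,x)\,\mathrm{d}s=\tau\bigl(\Delta u_{tt}(t,x)-\Delta u_{tt}(0,x)+(\gamma+1)(\Delta u_t(t,x)-\Delta u_1(x))-(1-\gamma)(\Delta u(t,x)-\Delta u_0(x))\bigr),
\]
and, using $u_{tt}(0,x)=\Delta u_0(x)+\Delta u_1(x)$ (from the equation in \eqref{VDW_Equation} at $t=0$), adding $\tau\ml{I}_{\mathrm{Data}}(x)$ collapses the full right-hand side of \eqref{Eq_11} to
\[
-\tau\bigl(\Delta u_{tt}(t,x)+(\gamma+1)\Delta u_t(t,x)-(1-\gamma)\Delta u(t,x)\bigr)+\tau\gamma v_2(x).
\]
So modulo $\tau\gamma v_2(x)$, the source is purely time-decaying and directly estimable by Theorems \ref{Thm_Estimates} and \ref{Thm_Estimates_Deriv}; the genuinely persistent term $\tau\gamma v_2(x)$ will force the $|x|v_2$ assumption.

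Next I would build a total energy $\ml{E}_{\mathrm{T}}[W](t)$ in the spirit of the $\ml{E}_1+k\ml{E}_2$ decomposition used for Theorem \ref{Thm_Sing_Lim_Energy}, now adapted to a fourth-order equation. Its natural blocks are $\tau\|W_{ttt}\|_{L^2}^2$, $\|\nabla W_{tt}\|_{L^2}^2$, $\|\nabla W_t\|_{L^2}^2$, $\|\nabla W\|_{L^2}^2$, $\|W_{tt}\|_{L^2}^2$ and $\|W_t\|_{L^2}^2$ (the last being the crucial new piece, as it is what ultimately yields $\|w\|_{L^2}^2$), together with cross-pairings $\int W_{ttt}W_{tt}\,\mathrm{d}x$, $\int\nabla W_t\cdot\nabla W\,\mathrm{d}x$, and $\int W_{tt}W_t\,\mathrm{d}x$. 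Testing \eqref{Eq_11} simultaneously against $W_{ttt}$ and against $W_{tt}$ and forming a suitable linear combination should produce a differential identity in which (i) the mass term $\gamma W_{tt}$ yields, after one integration by parts in $t$, a dissipation of $\|W_t\|_{L^2}^2$ that is absent from Theorem \ref{Thm_Sing_Lim_Energy}, and (ii) the destabilizing sign of $(1-\gamma)\Delta W$ is absorbed by $-(\gamma+1)\Delta W_t$. The hypothesis $\gamma>5$ is expected to appear precisely at this algebraic step, as the threshold that makes the resulting quadratic form uniformly positive-definite in $\tau\in(0,\tau_0]$.

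The source is then handled along the lines of \eqref{Est_00}--\eqref{Est_02}. The time-decaying contributions $\tau\Delta u(t,\cdot)$, $\tau\Delta u_t(t,\cdot)$ and $\tau\Delta u_{tt}(t,\cdot)$ (the last rewritten algebraically from \eqref{VDW_Equation} as $\Delta u+\Delta u_t-g\ast\Delta u$ and then controlled by the convolution estimate \eqref{Est_02}) are estimated by Theorems \ref{Thm_Estimates}--\ref{Thm_Estimates_Deriv}; the restriction $n\geqslant3$ is what makes the corresponding $\int_0^\infty(1+s)^{-\alpha}\,\mathrm{d}s$ tails integrable, producing a net $\tau^2(\|u_0\|^2_{H^4\cap L^{1,1}}+\|u_1\|^2_{H^4\cap L^{1,1}})$ contribution. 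For the persistent source $\tau\gamma v_2(x)$, two types of terms arise: when it pairs with $W_{ttt}$, integration by parts in $t$ (using $W_{tt}(0)=0$) shifts it onto a boundary term $\tau\gamma\int v_2(x)W_{tt}(t,x)\,\mathrm{d}x\leqslant\varepsilon\|W_{tt}\|_{L^2}^2+C\tau^2\|v_2\|_{L^2}^2$; when it pairs with $W_t$, the analogous boundary term $\tau\gamma\int v_2(x)W(t,x)\,\mathrm{d}x$ is controlled by Hardy's inequality (valid for $n\geqslant3$),
\[
\left|\int_{\mb{R}^n}v_2(x)W(t,x)\,\mathrm{d}x\right|\lesssim\bigl\|\,|x|v_2\bigr\|_{L^2(\mb{R}^n)}\bigl\|W(t,\cdot)/|x|\bigr\|_{L^2(\mb{R}^n)}\lesssim\bigl\|\,|x|v_2\bigr\|_{L^2(\mb{R}^n)}\|\nabla W(t,\cdot)\|_{L^2(\mb{R}^n)},
\]
which is exactly where the weighted hypothesis on $v_2$ is used. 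The initial data of $W$ vanish up to second order ($W(0)=W_t(0)=W_{tt}(0)=0$, because $v_0=u_0$ and $v_1=u_1$ give $w(0)=w_t(0)=0$), while $W_{ttt}(0)=w_2=v_2-\Delta u_0-\Delta u_1$, so $\ml{E}_{\mathrm{T}}[W](0)\lesssim\tau\|v_2-\Delta u_0-\Delta u_1\|_{L^2}^2$; integrating the differential inequality and invoking the coercive lower bound $\|W_t\|_{L^2}^2\lesssim\ml{E}_{\mathrm{T}}[W](t)$ together with \eqref{Rela_Sing_Lim_Solu} closes the estimate.

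I expect the main obstacle to be the energy construction itself: exhibiting an explicit quadratic combination of the blocks and cross-pairings above that is uniformly coercive in $\tau\in(0,\tau_0]$, whose time derivative contains a strictly positive $\|W_t\|_{L^2}^2$ dissipation (indispensable for recovering $\|w\|_{L^2}^2$), and in which the destabilizing term $(1-\gamma)\Delta W$ is exactly cancelled rather than merely dominated. It is here that both the threshold $\gamma>5$ and the dimensional restriction $n\geqslant3$ are expected to enter sharply; once this algebraic lemma is secured, the remaining steps (source estimates via Theorems \ref{Thm_Estimates}--\ref{Thm_Estimates_Deriv}, the Hardy reduction for the $\tau\gamma v_2$ source, and the initial-data bookkeeping) proceed by the same scheme as in the proof of Theorem \ref{Thm_Sing_Lim_Energy}.
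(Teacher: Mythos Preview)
Your proposal is essentially the paper's own proof: integrate $w$ in time, work with the fourth-order equation \eqref{Eq_11} for $W$, telescope the source to isolate the persistent piece $\tau\gamma v_2$, build a Lyapunov energy, and recover $\|w\|_{L^2}^2=\|W_t\|_{L^2}^2$ from its coercivity; the Hardy step and the role of $n\geqslant 3$ and $\gamma>5$ are identified correctly.

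One clarification, since your heuristic for how $\|W_t\|_{L^2}^2$ enters is slightly off and could send you in circles: testing only with $W_{ttt}$ and $W_{tt}$ does \emph{not} produce a dissipation term $\|W_t\|_{L^2}^2$ (the ``mass'' term $\gamma W_{tt}$ paired with $W_{tt}$ gives $\gamma\|W_{tt}\|_{L^2}^2$, and paired with $W_{ttt}$ gives $\tfrac{\gamma}{2}\tfrac{\mathrm d}{\mathrm dt}\|W_{tt}\|_{L^2}^2$). The paper uses a \emph{third} multiplier $W_t$, giving a third energy $\ml{E}_{m3}[W]$ whose key contribution is the block $\tfrac{\gamma}{2}\|W_t\|_{L^2}^2$ (arising from $\gamma\int W_{tt}W_t\,\mathrm dx$) together with the cross-pairings $\int W_{ttt}W_t\,\mathrm dx$ and $\int W_{tt}W_t\,\mathrm dx$ already in your list. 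The control of $\|W_t\|_{L^2}^2$ then comes \emph{only} from the coercivity of the total energy $\ml{E}_{m1}+k_1\ml{E}_{m2}+k_2\ml{E}_{m3}$, not from the dissipation; the sign conditions making this quadratic form positive (with $k_1=\gamma+1$ and $k_2\uparrow\gamma(\gamma+1)$) are exactly where $\gamma>5$ enters. With that adjustment your sketch matches the paper's argument line by line.
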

\begin{remark}
Under the considerations of additional weighted $L^1$ regularity on $u_0, u_1$ and $n\geqslant 3$, the global (in time) convergence result can be observed for the solution itself. Precisely, concerning any $t>0$, the following results hold:
\begin{itemize}
	\item if $v_2\neq\Delta u_0+\Delta u_1$, then the rate of convergence is $\ml{O}(\tau)$ as $\tau\downarrow0$;
	\item if $v_2=\Delta u_0+\Delta u_1 $, then the rate of convergence is improved by $\ml{O}(\tau^2)$ as $\tau\downarrow0$.
\end{itemize}
\end{remark}
\begin{proof}
To derive our desired result, we need to construct several different energies for the fourth-order (in time) equation, which are quite different from the proof of Theorem \ref{Thm_Sing_Lim_Energy}. We will show this procedure by several steps.\\

\noindent\textbf{Step 1. Construction of the first energy by multiplying $W_{ttt}$}\\
Let us first multiply \eqref{Eq_11} by $W_{ttt}$ and integrate the resultant over $\mb{R}^n$. Then, we may arrive at
\begin{align*}
\frac{\mathrm{d}}{\mathrm{d}t}\ml{E}_{m1}[W](t)+\ml{R}_{m1}[W](t)=-\int_{\mb{R}^n}\int_0^t\widetilde{F}(u)(s,x)\mathrm{d}s\,W_{ttt}(t,x)\mathrm{d}x-\tau\int_{\mb{R}^n}\ml{I}_{\mathrm{Data}}(x)W_{ttt}(t,x)\mathrm{d}x,
\end{align*}
where
\begin{align*}
\ml{E}_{m1}[W](t)&:=\frac{\tau}{2}\|W_{ttt}(t,\cdot)\|_{L^2(\mb{R}^n)}^2+\frac{1}{2}\|\nabla W_{tt}(t,\cdot)\|_{L^2(\mb{R}^n)}^2+\frac{\gamma}{2}\|W_{tt}(t,\cdot)\|_{L^2(\mb{R}^n)}^2\\
&\quad+\frac{1-\gamma}{2}\|\nabla W_t(t,\cdot)\|_{L^2(\mb{R}^n)}^2+(\gamma+1)\int_{\mb{R}^n}\nabla W_t(t,x)\cdot\nabla W_{tt}(t,x)\mathrm{d}x\\
&\quad-(1-\gamma)\int_{\mb{R}^n}\nabla W(t,x)\cdot \nabla W_{tt}(t,x)\mathrm{d}x,
\end{align*}
and
\begin{align*}
\ml{R}_{m1}[W](t):=(1+\gamma\tau)\|W_{ttt}(t,\cdot)\|_{L^2(\mb{R}^n)}^2-(\gamma+1)\|\nabla W_{tt}(t,\cdot)\|_{L^2(\mb{R}^n)}^2.
\end{align*}

\noindent\textbf{Step 2. Construction of the second energy by multiplying $W_{tt}$}\\
Multiplying \eqref{Eq_11} by $W_{tt}$ and integrating it over $\mb{R}^n$, we may get
\begin{align*}
\frac{\mathrm{d}}{\mathrm{d}t}\ml{E}_{m2}[W](t)+\ml{R}_{m2}[W](t)=-\int_{\mb{R}^n}\int_0^t\widetilde{F}(u)(s,x)\mathrm{d}s\,W_{tt}(t,x)\mathrm{d}x-\tau\int_{\mb{R}^n}\ml{I}_{\mathrm{Data}}(x)W_{tt}(t,x)\mathrm{d}x,
\end{align*}
where
\begin{align*}
\ml{E}_{m2}[W](t)&:=\tau\int_{\mb{R}^n}W_{ttt}(t,x)W_{tt}(t,x)\mathrm{d}x+\frac{1+\gamma\tau}{2}\|W_{tt}(t,\cdot)\|_{L^2(\mb{R}^n)}^2+\frac{\gamma+1}{2}\|\nabla W_t(t,\cdot)\|_{L^2(\mb{R}^n)}^2\\
&\quad-(1-\gamma)\int_{\mb{R}^n}\nabla W(t,x)\cdot\nabla W_t(t,x)\mathrm{d}x,
\end{align*}
and
\begin{align*}
\ml{R}_{m2}[W](t)&:=-\tau\|W_{ttt}(t,\cdot)\|_{L^2(\mb{R}^n)}^2+\|\nabla W_{tt}(t,\cdot)\|_{L^2(\mb{R}^n)}^2+\gamma\|W_{tt}(t,\cdot)\|_{L^2(\mb{R}^n)}^2\\
&\quad+(1-\gamma)\|\nabla W_t(t,\cdot)\|_{L^2(\mb{R}^n)}^2.
\end{align*}

\noindent\textbf{Step 3. Construction of the third energy by multiplying $W_{t}$}\\
Similarly, by multiplying \eqref{Eq_11} by $W_{t}$ and integrating the equality over $\mb{R}^n$, one finds
\begin{align*}
\frac{\mathrm{d}}{\mathrm{d}t}\ml{E}_{m3}[W](t)+\ml{R}_{m3}[W](t)=-\int_{\mb{R}^n}\int_0^t\widetilde{F}(u)(s,x)\mathrm{d}s\,W_{t}(t,x)\mathrm{d}x-\tau\int_{\mb{R}^n}\ml{I}_{\mathrm{Data}}(x)W_{t}(t,x)\mathrm{d}x,
\end{align*}
where
\begin{align*}
\ml{E}_{m3}[W](t)&:=\tau\int_{\mb{R}^n}W_{ttt}(t,x)W_{t}(t,x)\mathrm{d}x+(1+\gamma\tau)\int_{\mb{R}^n}W_{tt}(t,x)W_t(t,x)\mathrm{d}x+\frac{1}{2}\|\nabla W_t(t,\cdot)\|_{L^2(\mb{R}^n)}^2\\
&\quad-\frac{\tau}{2}\|W_{tt}(t,\cdot)\|_{L^2(\mb{R}^n)}^2+\frac{\gamma}{2}\|W_t(t,\cdot)\|_{L^2(\mb{R}^n)}^2+\frac{\gamma-1}{2}\|\nabla W(t,\cdot)\|_{L^2(\mb{R}^n)}^2,
\end{align*}
and
\begin{align*}
\ml{R}_{m3}[W](t)&:=-(1+\gamma\tau)\|W_{tt}(t,\cdot)\|_{L^2(\mb{R}^n)}^2+(\gamma+1)\|\nabla W_t(t,\cdot)\|_{L^2(\mb{R}^n)}^2.
\end{align*}

\noindent\textbf{Step 4. Construction of the total energy}\\
Let us now define
\begin{align*}
\ml{E}_m[W](t)&:=\ml{E}_{m1}[W](t)+k_1\ml{E}_{m2}[W](t)+k_2\ml{E}_{m3}[W](t),\\
\ml{R}_m[W](t)&:=\ml{R}_{m1}[W](t)+k_1\ml{R}_{m2}[W](t)+k_2\ml{R}_{m3}[W](t),
\end{align*}
which lead to
\begin{align}\label{Total_Est}
\frac{\mathrm{d}}{\mathrm{d}t}\ml{E}_m[W](t)+\ml{R}_m[W](t)=\ml{F}_m[W](t),
\end{align}
where $k_1$, $k_2$ are suitably positive constants (independent of $\tau$) which will be determined later, and 
\begin{align*}
\ml{F}_m[W](t)&:=-\int_{\mb{R}^n}\int_0^t\widetilde{F}(u)(s,x)\mathrm{d}s\,(k_2W_t(t,x)+k_1W_{tt}(t,x)+W_{ttt}(t,x))\mathrm{d}x\\
&\quad-\tau\int_{\mb{R}^n}\ml{I}_{\mathrm{Data}}(x)(k_2W_t(t,x)+k_1W_{tt}(t,x)+W_{ttt}(t,x))\mathrm{d}x.
\end{align*}

\noindent\textbf{Step 5. Estimates of the right-hand side}\\
Considering
\begin{align*}
\int_0^t\widetilde{F}(u)(s,x)\mathrm{d}s&=\tau\left(\Delta u_{tt}+(\gamma+1)\Delta u_t-(1-\gamma)\Delta u\right)(t,x)+\tau\widetilde{I}_{\mathrm{Data}}(x),\\
\widetilde{I}_{\mathrm{Data}}(x)&=(-\Delta^2u_1-\Delta^2u_0-(\gamma+1)\Delta u_1+(1-\gamma)\Delta u_0)(x),
\end{align*}
there exists a positive constant $\varepsilon_2$ independent of $\tau$ such that
\begin{align*}
\ml{F}_m[W](t)&\leqslant\frac{C\tau^2 }{4\varepsilon_2}\left(\|\nabla u_{tt}(t,\cdot)\|_{L^2(\mb{R}^n)}^2+\|\nabla u_t(t,\cdot)\|_{L^2(\mb{R}^n)}^2+\|\nabla u(t,\cdot)\|_{L^2(\mb{R}^n)}^2\right)\\
&\quad+\frac{C\tau^2}{2\varepsilon_2}\left(\|\Delta u_{tt}(t,\cdot)\|_{L^2(\mb{R}^n)}^2+\|\Delta u_t(t,\cdot)\|_{L^2(\mb{R}^n)}^2+\|\Delta u(t,\cdot)\|_{L^2(\mb{R}^n)}^2\right)\\
&\quad+3\varepsilon_2\left(\|\nabla W_t(t,\cdot)\|_{L^2(\mb{R}^n)}^2+\|W_{tt}(t,\cdot)\|_{L^2(\mb{R}^n)}^2+\|W_{ttt}(t,\cdot)\|_{L^2(\mb{R}^n)}^2\right)\\
&\quad+\tau \gamma \int_{\mb{R}^n}v_2(x)(k_2W_t(t,x)+k_1W_{tt}(t,x)+W_{ttt}(t,x))\mathrm{d}x.
\end{align*} 
where we observed that $\ml{I}_{\mathrm{Data}}(x)+\widetilde{\ml{I}}_{\mathrm{Data}}(x)=-\gamma v_2(x)$.\\
Recalling the result in Theorem \ref{Thm_Estimates} or viewing of \eqref{Est_01} and \eqref{Est_02}, we get
\begin{align}\label{Eq_13}
\ml{F}_m[W](t)&\leqslant \frac{C\tau^2}{\varepsilon_2}\left((1+t)^{-1-\frac{n}{2}}\|u_0\|_{H^4(\mb{R}^n)\cap L^{1,1}(\mb{R}^n)}^2+(1+t)^{-\frac{n}{2}}\|u_1\|_{H^4(\mb{R}^n)\cap L^{1,1}(\mb{R}^n)}^2\right)\notag\\
&\quad +3\varepsilon_2\left(\|\nabla W_t(t,\cdot)\|_{L^2(\mb{R}^n)}^2+\|W_{tt}(t,\cdot)\|_{L^2(\mb{R}^n)}^2+\|W_{ttt}(t,\cdot)\|_{L^2(\mb{R}^n)}^2\right)\notag\\
&\quad+\frac{\mathrm{d}}{\mathrm{d}t}\left(\tau\gamma \int_{\mb{R}^n}v_2(x)(k_2W(t,x)+k_1W_t(t,x)+W_{tt}(t,x))\mathrm{d}x\right).
\end{align}
\noindent\textbf{Step 6. Estimates of the energy}\\
Let us begin with the estimate of remainder terms associated with the right-hand side. We get
\begin{align}\label{Eq_12}
&\ml{R}_m[W](t)-3\varepsilon_2\left(\|\nabla W_t(t,\cdot)\|_{L^2(\mb{R}^n)}^2+\|W_{tt}(t,\cdot)\|_{L^2(\mb{R}^n)}^2+\|W_{ttt}(t,\cdot)\|_{L^2(\mb{R}^n)}^2\right)\notag\\
&=(1+\gamma\tau-k_1\tau-3\varepsilon_2)\|W_{ttt}(t,\cdot)\|_{L^2(\mb{R}^n)}^2+(\gamma k_1-(1+\gamma \tau)k_2-3\varepsilon_2)\|W_{tt}(t,\cdot)\|_{L^2(\mb{R}^n)}^2\notag\\
&\quad+(k_1(1-\gamma)+k_2(\gamma-1)-3\varepsilon_2)\|\nabla W_t(t,\cdot)\|_{L^2(\mb{R}^n)}^2+(k_1-(\gamma+1))\|\nabla W_{tt}(t,\cdot)\|_{L^2(\mb{R}^n)}^2.
\end{align}
To guarantee the nonnegativity of all coefficients on the right-hand sides of the previous identity, one should treat
\begin{align}\label{Parameter_Range_01}
\begin{cases}
1+\gamma\tau-k_1\tau-3\varepsilon_2\geqslant 0,\\
\gamma k_1-(1+\gamma \tau)k_2-3\varepsilon_2\geqslant 0,\\
k_1(1-\gamma)+k_2(\gamma-1)-3\varepsilon_2\geqslant 0,\\
k_1-(\gamma+1)\geqslant 0.
\end{cases}
\end{align}
Indeed, let us choose $\varepsilon_2$ to be a sufficient small and fixed positive constant, $0<\tau\ll 1$ and $k_1=\gamma+1$. Then, it holds $k_2\in(1+\gamma,\gamma+\gamma^2)$ for all $\gamma>1$. Simultaneously, we assert that $\varepsilon_2,k_1,k_2$ are independent of $\tau$.

In other words, from \eqref{Total_Est}, \eqref{Eq_13} and \eqref{Eq_12}, we have
\begin{align*}
&\frac{\mathrm{d}}{\mathrm{d}t}\left(\ml{E}_m[W](t)-\tau\gamma \int_{\mb{R}^n}v_2(x)(k_2W(t,x)+k_1W_t(t,x)+W_{tt}(t,x))\mathrm{d}x\right)\\
&\leqslant C\tau^2 \left((1+t)^{-1-\frac{n}{2}}\|u_0\|_{H^4(\mb{R}^n)\cap L^{1,1}(\mb{R}^n)}^2+(1+t)^{-\frac{n}{2}}\|u_1\|_{H^4(\mb{R}^n)\cap L^{1,1}(\mb{R}^n)}^2\right).
\end{align*}
Integrating the last inequality over $[0,t]$, it yields
\begin{align*}
\ml{E}_m[W](t)&\leqslant\tau\gamma \int_{\mb{R}^n}v_2(x)(k_2W(t,x)+k_1W_t(t,x)+W_{tt}(t,x))\mathrm{d}x+\frac{\tau}{2}\|v_2-\Delta u_0-\Delta u_1\|_{L^2(\mb{R}^n)}^2\\
&\quad+ C\tau^2 \left(\|u_0\|_{H^4(\mb{R}^n)\cap L^{1,1}(\mb{R}^n)}^2+\kappa_n(t)\|u_1\|_{H^4(\mb{R}^n)\cap L^{1,1}(\mb{R}^n)}^2\right),
\end{align*}
where $\kappa_n(t)$ was defined in \eqref{kappa}.\\
Employing Hardy's inequality for $n\geqslant 3$, it holds
\begin{align*}
\tau k_2\gamma\int_{\mb{R}^n}v_2(x)W(t,x)\mathrm{d}x&\leqslant\frac{n\gamma^2k_2^2\tau^2}{4(n-2)\varepsilon_3}\|\,|x|v_2\|_{L^2(\mb{R}^n)}^2+\frac{(n-2)\varepsilon_3}{n}\int_{\mb{R}^n}\frac{|W(t,x)|^2}{|x|^2}\mathrm{d}x\\
&\leqslant\frac{C\tau^2}{\varepsilon_3}\|\,|x|v_2\|_{L^2(\mb{R}^n)}^2+\varepsilon_3\|\nabla W(t,\cdot)\|_{L^2(\mb{R}^n)}^2,
\end{align*}
where $\varepsilon_3$ is a suitably positive constant independent of $\tau$.\\
Let us use Cauchy-Schwarz inequality with $\varepsilon_3$ to arrive at
\begin{align*}
\tau\gamma\int_{\mb{R}^n}v_2(x)(k_1W_t(t,x)+W_{tt}(t,x))\mathrm{d}x\leqslant\frac{C\tau^2}{\varepsilon_3}\|v_2\|_{L^2(\mb{R}^n)}^2+\varepsilon_3\left(\|W_t(t,\cdot)\|_{L^2(\mb{R}^n)}^2+\|W_{tt}(t,\cdot)\|_{L^2(\mb{R}^n)}^2\right).
\end{align*}
We have derived
\begin{align*}
\widetilde{\ml{E}}_m[W](t)&=\ml{E}_m[W](t)-\varepsilon_3\left(\|\nabla W(t,\cdot)\|_{L^2(\mb{R}^n)}^2+\|W_t(t,\cdot)\|_{L^2(\mb{R}^n)}^2+\|W_{tt}(t,\cdot)\|_{L^2(\mb{R}^n)}^2\right)\\
&\leqslant\frac{C\tau^2}{\varepsilon_3}\left(\|v_2\|_{L^2(\mb{R}^n)}^2+\|\,|x|v_2\|_{L^2(\mb{R}^n)}^2\right)+\frac{\tau}{2}\|v_2-\Delta u_0-\Delta u_1\|_{L^2(\mb{R}^n)}^2\\
&\quad+ C\tau^2 \left(\|u_0\|_{H^4(\mb{R}^n)\cap L^{1,1}(\mb{R}^n)}^2+\|u_1\|_{H^4(\mb{R}^n)\cap L^{1,1}(\mb{R}^n)}^2\right),
\end{align*}
Eventually, we only need to give a lower bound estimate for the modified energy $\widetilde{\ml{E}}_m[W](t)$. By applying Cauchy-Schwarz inequality again associated with positive constants $\varepsilon_4,\dots,\varepsilon_9$, one may deduce
\begin{align*}
&(\gamma+1)\int_{\mb{R}^n}\nabla W_t(t,x)\cdot\nabla W_{tt}(t,x)\mathrm{d}x-(1-\gamma)\int_{\mathbb{R}^n}\nabla W(t,x)\cdot\nabla W_{tt}(t,x)\mathrm{d}x\\
&+\tau k_1\int_{\mb{R}^n}W_{ttt}(t,x)W_{tt}(t,x)\mathrm{d}x-(1-\gamma)k_1\int_{\mb{R}^n}\nabla W(t,x)\cdot\nabla W_t(t,x)\mathrm{d}x\\
&+\tau k_2\int_{\mb{R}^n}W_{ttt}(t,x)W_t(t,x)\mathrm{d}x+(1+\gamma\tau)k_2\int_{\mb{R}^n}W_{tt}(t,x)W_t(t,x)\mathrm{d}x\\
&\geqslant -\frac{(\gamma+1)^2}{4\varepsilon_4}\|\nabla W_t(t,\cdot)\|_{L^2(\mb{R}^n)}^2-\varepsilon_4\|\nabla W_{tt}(t,\cdot)\|_{L^2(\mb{R}^n)}^2-\frac{(1-\gamma)^2}{4\varepsilon_5}\|\nabla W(t,\cdot)\|_{L^2(\mb{R}^n)}^2\\
&\quad-\varepsilon_5\|\nabla W_{tt}(t,\cdot)\|_{L^2(\mb{R}^n)}^2-\frac{\tau^2k_1^2}{4\varepsilon_6}\|W_{ttt}(t,\cdot)\|_{L^2(\mb{R}^n)}^2-\varepsilon_6\|W_{tt}(t,\cdot)\|_{L^2(\mb{R}^n)}^2\\
&\quad-\frac{(1-\gamma)^2k_1^2}{4\varepsilon_7}\|\nabla W(t,\cdot)\|_{L^2(\mb{R}^n)}^2-\varepsilon_7\|\nabla W_{t}(t,\cdot)\|_{L^2(\mb{R}^n)}^2-\frac{\tau^2k_2^2}{4\varepsilon_8}\|W_{ttt}(t,\cdot)\|_{L^2(\mb{R}^n)}^2\\
&\quad-\varepsilon_8\|W_t(t,\cdot)\|_{L^2(\mb{R}^n)}^2-\frac{(1+\gamma\tau)^2k_2^2}{4\varepsilon_9}\|W_{tt}(t,\cdot)\|_{L^2(\mb{R}^n)}^2-\varepsilon_9\|W_t(t,\cdot)\|_{L^2(\mb{R}^n)}^2.
\end{align*}
Thus, we can estimate
\begin{align*}
\widetilde{\ml{E}}_m[W](t)&\geqslant\tau a_{m1}\|W_{ttt}(t,\cdot)\|_{L^2(\mb{R}^n)}^2+a_{m2}\|\nabla W_{tt}(t,\cdot)\|_{L^2(\mb{R}^n)}^2+a_{m3}\|W_{tt}(t,\cdot)\|_{L^2(\mb{R}^n)}^2\\
&\quad+a_{m4}\|\nabla W_{t}(t,\cdot)\|_{L^2(\mb{R}^n)}^2+a_{m5}\| W_{t}(t,\cdot)\|_{L^2(\mb{R}^n)}^2+a_{m6}\|\nabla W(t,\cdot)\|_{L^2(\mb{R}^n)}^2,
\end{align*}
where the coefficients are given by
\begin{align*}
a_{m1}:&=\frac{1}{2}-\frac{\tau k_1^2}{4\varepsilon_6}-\frac{\tau k_2^2}{4\varepsilon_8},\ \ a_{m2}:=\frac{1}{2}-\varepsilon_4-\varepsilon_5,\\
a_{m3}:&=\frac{\gamma}{2}+\frac{(1+\gamma\tau)k_1}{2}-\frac{\tau k_2}{2}-\varepsilon_3-\varepsilon_6-\frac{(1+\gamma\tau)^2k_2^2}{4\varepsilon_9},\\
a_{m4}:&=\frac{1-\gamma}{2}+\frac{(\gamma+1)k_1}{2}+\frac{k_2}{2}-\frac{(\gamma+1)^2}{4\varepsilon_4}-\varepsilon_7,\ \ a_{m5}:=\frac{\gamma k_2}{2}-\varepsilon_3-\varepsilon_8-\varepsilon_9\\
a_{m6}:&=\frac{(\gamma-1)k_2}{2}-\varepsilon_3-\frac{(1-\gamma)^2}{4\varepsilon_5}-\frac{(1-\gamma)^2k_1^2}{4\varepsilon_7}.
\end{align*}
As what we did in \eqref{Parameter_Range_01}, we choose $\varepsilon_3,\varepsilon_6,\varepsilon_8$ to be sufficient small and fixed positive constants, $0<\tau\ll 1$, $k_1=\gamma+1$ and $k_2\uparrow\gamma+\gamma^2$. Then, in the case $\gamma>5$, all coefficients $a_{mj}$ for $j=1,\dots,6$, are positive number provided $\varepsilon_4\downarrow 1/3$, $\varepsilon_5\uparrow 1/6$, $\varepsilon_7\uparrow(\gamma-1)^2/4$, $\varepsilon_9\uparrow2\gamma(\gamma^2+\gamma)/5$ and $\varepsilon_4+\varepsilon_5\leqslant 1/2$. We should underline that the suitable choice of $\varepsilon_4$ is determined by the positive condition from $a_{m4}$. What's more, the choice of $k_1$ and $k_2$ are restricted by the condition \eqref{Parameter_Range_01}.\\
 All in all, for $\gamma>5$, there exists a positive constant $C$ independent of $\tau$ such that
\begin{align*}
\widetilde{\ml{E}}_m[W](t)\geqslant C\|W_t(t,\cdot)\|_{L^2(\mb{R}^n)}^2.
\end{align*}
By applying \eqref{Rela_Sing_Lim_Solu}, our proof is complete. 
\end{proof}

\section*{Acknowledgments}
The author thanks Yanan Li (Harbin Engineering University)  for the suggestions in the preparation of the paper.

\end{document}